\numberwithin{equation}{section}
\theoremstyle{plain} 
\newtheorem{theorem}{Theorem}[section]
\newtheorem{corollary}[theorem]{Corollary}
\newtheorem{lemma}[theorem]{Lemma}
\newtheorem{definition}[theorem]{Definition}
\newtheorem{remark}[theorem]{Remark}
\newcommand{\Z}{\mathbb Z}
\newcommand{\N}{\mathbb N}
\newcommand{\R}{\mathbb R}
\newcommand{\dd}{\mathrm{d}}
\newcommand{\prob}{\overset{\mathrm{pr.}}{\longrightarrow}}
\newcommand{\as}{\overset{\mathrm{a.s.}}{\longrightarrow}}
\newcommand{\weak}{\Longrightarrow}
\newcommand{\E}{\mathbb E}
\renewcommand{\P}{\mathbb P}
\newcommand{\CM}{\mathrm{CM}}
\newcommand{\dn}{d^{(G_{n})}}
\newcommand{\Dst}{\Delta^{(\mathrm{st})}}
\begin{document}

\title{The multi-level friendship paradox for sparse random graphs}

\author{
\renewcommand{\thefootnote}{\arabic{footnote}}
Rajat Subhra Hazra, Frank den Hollander, Azadeh Parvaneh
\footnotemark[1]
}

\footnotetext[1]{
Mathematical Institute, Leiden University, Einsteinweg 55, 2333 CC Leiden, The Netherlands.\\ 
{\tt \{r.s.hazra,denholla,s.a.parvaneh.ziabari\}@math.leidenuniv.nl}
}

\maketitle

\begin{abstract}
In \cite{HHP} we analysed the friendship paradox for sparse random graphs. For four classes of random graphs we characterised the empirical distribution of the friendship biases between vertices and their neighbours at distance $1$, proving convergence as $n\to\infty$ to a limiting distribution, with $n$ the number of vertices, and identifying moments and tail exponents of the limiting distribution. In the present paper we look at the multi-level friendship bias between vertices and their neighbours at distance $k \in \N$ obtained via a $k$-step exploration according to a backtracking or a non-backtracking random walk. We identify the limit of empirical distribution of the multi-level friendship biases as $n\to\infty$ and/or $k\to\infty$. We show that for non-backtracking exploration the two limits commute for a large class of sparse random graphs, including those that locally converge to a rooted Galton-Watson tree. In particular, we show that the same limit arises when $k$ depends on $n$, i.e., $k=k_n$, provided $\lim_{n\to\infty} k_n = \infty$ under some mild conditions. We exhibit cases where the two limits do not commute and show the relevance of the mixing time of the exploration.

\medskip\noindent
\emph{Keywords:}
Sparse random graph, local convergence, multi-level friendship bias, exploration, mixing.

\medskip\noindent
\emph{MSC2020:} 
05C80, 
60C05, 
60F15, 
60J80, 
60G50. 

\medskip\noindent
\emph{Acknowledgement:}
The research in this paper was supported through NWO Gravitation Grant NETWORKS 024.002.003. AP received funding from the European Union's Horizon 2020 research and innovation programme under the Marie Sk\l odowska-Curie Grant Agreement No 101034253. The authors thank Remco van der Hofstad for fruitful discussions.

\vspace{0.1cm}
\hfill\includegraphics[scale=0.1]{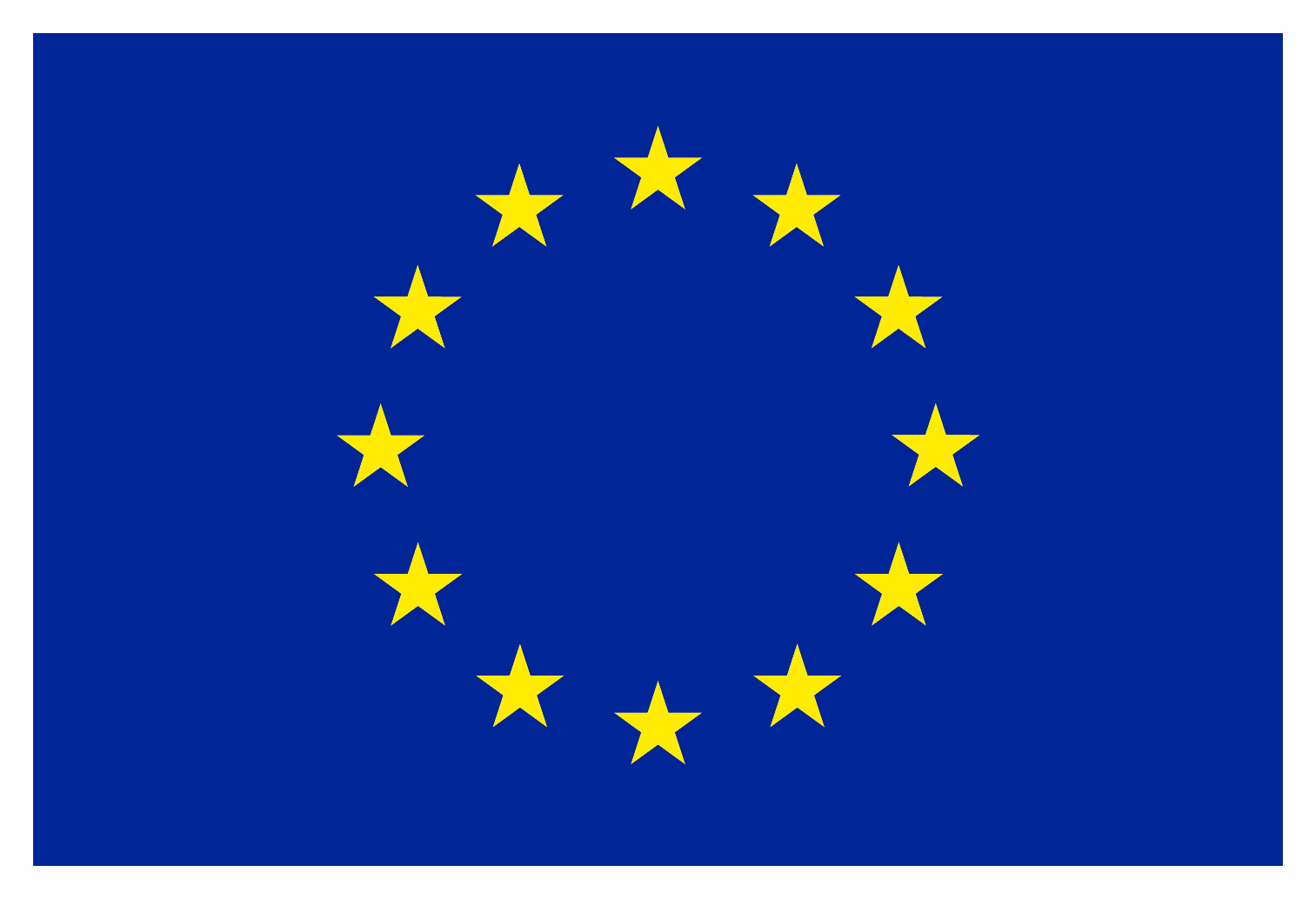}

\end{abstract}


\newpage
\scriptsize
\tableofcontents
\normalsize


\section{Introduction and outline}
\label{sec:intro}


\subsection{The friendship paradox}

In 1991, the American sociologist Scott Feld discovered a phenomenon called the \emph{friendship paradox}, which he summarised by saying that `your friends have more friends than you do': within any social network of individuals with mutual friendships the difference between the average number of friends of friends and the average number of friends is always non-negative \cite{SF}. Later, many papers explored the friendship paradox from different perspectives, and proposed extensions. A few papers studied the friendship paradox from a \emph{mathematical} perspective, using concepts from graph theory to describe social networks as graphs in which the vertices represent individuals and the edges represent friendships between individuals. The goal of these papers was to \emph{quantify} the friendship paradox for several classes of sparse random graphs.  

In \cite{J}, the implications of the friendship paradox on systematic biases in perception and thought contagion are discussed. This study is based on the idea that social norms are influenced by our perceptions of the people around us. For instance, individuals are more likely to smoke when they have acquaintances who smoke \cite{CF}. In \cite{NK}, a new strategy for predicting election polls is proposed based on the friendship paradox. Moreover, research on the so-called \emph{generalised friendship paradox} investigates how attributes other than popularity can produce similar phenomena. For instance, your co-authors have more citations, more publications and more collaborators than you do \cite{EJ}, or your virtual friends receive more viral content than you do \cite{HKL}. The behaviour of the friendship paradox on random graphs, such as the Erd\H{o}s-R\'enyi random graph, has been explored in \cite{CKN} and \cite{PYNSB}. In \cite{CKN}, it is argued that the generalised friendship paradox should hold when the attribute in question correlates positively with the number of friends.

A few papers have investigated the friendship paradox from a probabilistic perspective. In \cite{CR}, for instance, it is proved that a randomly chosen friend of a randomly selected person has stochastically more friends than that person has. In \cite{HHP}, a new notion called \emph{significant friendship paradox} is introduced for locally tree-like random graphs. First, the \emph{friendship bias} of a given vertex is defined, both for simple graphs and multi-graphs, as the difference between the average degree of the neighbours of the vertex and the degree of the vertex itself. The friendship paradox is defined to be significant for a locally tree-like random graph if the proportion of vertices with non-negative friendship bias converges in probability to a number in $[\tfrac12,1]$ in the limit as the number of vertices tends to infinity. This notion is interesting because, although the friendship paradox holds in all finite graphs without self-loops, the number of vertices with negative friendship bias can vastly outnumber those with non-negative bias. Interestingly, \cite{HHP} provides mathematical and numerical evidence that the friendship paradox is significant for several well-known classes of locally tree-like random graphs, namely, the homogeneous Erd\H{o}s-R\'enyi random graph, the inhomogeneous Erd\H{o}s-R\'enyi random graph, the configuration model and the preferential attachment model. This provides valuable insight into the structure of these random graphs.


\subsection{The multi-level friendship paradox}

While \cite{SF} and the previously mentioned studies primarily addressed the friendship paradox at the individual level, subsequent research in \cite{KCR} extended the concept to higher levels. In \cite{KCR}, the concept of \emph{multi-step friendship paradox} was introduced, which considers the friendship paradox not only at the individual level but across multiple levels in the network hierarchy. It explores how the friendship paradox persists and evolves within communities, organisations, or entire populations. Later, \cite{BZ} analysed the multi-level friendship paradox in more detail from a probabilistic perspective. 

In the present paper we choose the higher levels via \emph{exploration}, i,e., we run a random walk on the graph and register the random vertex that it visits after $k$ steps. We focus on two types of exploration: backtracking random walk and non-backtracking random walk, plus a `lazy' version of backtracking random walk where the random walk has a strictly positive probability to stand still. Other choices would be possible as well, but we will not consider them. We look at the short-level and long-level behaviour of the friendship paradox in locally tree-like random graphs, where short level refers to small exploration depths and long-level refers to large exploration depths. We focus on the \emph{empirical friendship-bias distribution} $\mu_n^{(k)}$ at depth $k$ when the graph has $n$ vertices. We identify the limit of $\mu_n^{(k)}$ when $n\to\infty$ and/or $k\to\infty$. We find that for non-backtracking exploration the two limits commute, while for backtracking exploration they do not when the local limit is a subcritical tree. 

In the study of the short-level behaviour of the friendship paradox, we show that $\mu_n^{(k)}$ converges weakly to some measure $\mu_\infty^{(k)}$ as $n\to\infty$, for both types of exploration. However, to identify the weak limit of $\mu_\infty^{(k)}$ as $k\to\infty$ we must analyse the behaviour of the exploration on the limiting tree. A non-backtracking random walk on a tree can only move downward. Under the assumption that the offspring distributions are independent, we show that $\mu_\infty^{(k)}$ converges weakly to some $\mu$ as $k\to\infty$. In contrast, a backtracking random walk on a tree can move both upward and downward. We restrict our analysis to instances where the limiting tree is almost surely finite, for which it turns out that $\mu_\infty^{(k)}$ converges weakly to some $\mu^{\star}$ that is different from $\mu$.

In the study of the long-level behaviour of the friendship paradox, when the finite random graph on $n$ vertices is also almost surely connected and non-bipartite, the backtracking random walk on it is ergodic, and its stationary distribution is the limit of the $k$-step transition probabilities. Consequently, the $k$-level friendship bias converges to a stationary friendship bias as $k\to\infty$. Using this property, we establish that the weak limit of $\mu_n^{(k)}$ as $k\to\infty$ corresponds to the empirical distribution of the stationary friendship biases almost surely. Moreover, by using properties of local convergence of random graphs, we prove that the limiting law in this setting is the measure $\mu$ in probability. This result demonstrates that the limits of $\mu_n^{(k)}$ as $k\to\infty$ and $n\to\infty$ do not always commute. For the non-backtracking exploration, we model each edge of the graph as two directed edges in opposite directions and employ the non-backtracking random walk on directed edges. Notably, this approach yields the same result as the backtracking exploration. Since the assumption of almost sure connectedness may be too restrictive, we extend our results to more general random graphs, including bipartite or disconnected graphs, by incorporating a lazy exploration technique.

We also study the limit of the empirical friendship-bias distribution at a depth that depends on $n$, i.e., $k = k_n$, for choices of $k_n$ satisfying $\lim_{n\to\infty} k_n = \infty$. In the case when $k_n$ is much larger than the mixing time of the exploration we show that, under some mild additional assumptions, the limit is $\mu$ for both types of exploration. In the case when the mixing time is much smaller than the mixing time, we focus on the configuration model and non-backtracking exploration, and show that the limit remains $\mu$. Here, we make use of the possibility to couple graph explorations with certain branching processes in the configuration model. Finally, we show that, under certain uniform control conditions on the behaviour of $\mu_n^{(k)}$ with respect to both $n$ and $k$, $\mu_n^{(k_n)}$ converges weakly to the measure $\mu$ in probability for both types of exploration and any choice of $k_n$ satisfying $\lim_{n\to\infty} k_n = \infty$. 
  
The key tool in our analysis is the notion of \emph{local convergence}, which captures what a random graph looks like when viewed from a vertex that is chosen uniformly at random in the limit as $n\to\infty$. We combine this tool with properties of the local limit and properties of the exploration on the local limit in order to \emph{quantify} the multi-level friendship paradox. An important contribution of our paper is that it gives a \emph{geometric} clarification of how the bias that is inherent in the friendship paradox behaves as a function of the size of the graph and the depth of the exploration, and how it settles down to a limiting bias as both the size and the depth tend to infinity.

The detailed statements of these convergence results are given in Theorems \ref{thm1} and \ref{thm2} (for the short-level regime), Theorem \ref{thm3} (for the long-level regime), Corollary \ref{cor} and Theorem \ref{thm6} (for the lazy exploration), and Theorems \ref{thmA1}, \ref{thmA2} and \ref{thm6} (for the joint-limit regime).


\subsection{Outline}

The remainder of this paper is organised as follows. In Section \ref{sec:RWRG} we explain which random walks on random graphs we will use for the \emph{exploration} that underlies the multi-level friendship paradox. In Sections \ref{sec:shortlevel}--\ref{sec:orderoflimits} we analyse the empirical friendship-bias distribution in the limit as the level of the exploration $k$ and/or the size of the graph $n$ tend to infinity. In Section \ref{sec:shortlevel} we let $n\to\infty$ followed by $k \to \infty$ (`short level'), in Section \ref{sec:longlevel} we do the reverse (`long level'). We identify the limits and find that they can be the \emph{same} or can be \emph{different}. In Section \ref{sec:orderoflimits} we study what happens when $k$ depends on $n$, i.e., $k = k_n$ and a \emph{joint limit} is taken. Proofs of the main theorems are collected in Section \ref{sec:proofs}.


\section{Graphs and random walks on random graphs}
\label{sec:RWRG}

Section~\ref{subsec:graphs} defines graphs and adjacency matrices. Section \ref{subsec:FPHL} describes non-backtracking exploration, Section \ref{subsec:FPHL2} backtracking exploration. Section \ref{subsec:MLFB} defines the empirical distribution of the multi-level friendship biases associated with the vertices, shows that this has non-negative mean, and identifies when the mean is zero. 


\subsection{Graphs}
\label{subsec:graphs}

Throughout this paper, a graph $G = (V(G),E(G))$ is an undirected simple graph or multi-graph. The vertices in $V(G)$ represent individuals, the edges in $E(G)$ represent mutual friendships. We use two-membered sets in $V(G)$ to represent edges, and in the presence of multiple edges $E(G)$ is treated as a multi-set. Also a self-loop is represented in $E(G)$ by a multi-set. Denote by
\begin{align*}
(d_{i}^{(G)})_{i \in V(G)}, \qquad A^{(G)} = (A_{i,j}^{(G)})_{i,j\in V(G)},
\end{align*}
the \emph{degree sequence}, respectively, the \emph{adjacency matrix} of $G$, where $A_{i,j}^{(G)}$ is the number of edges connecting $i$ and $j$. Each self-loop adds 2 to the degree. The pair $(G,o)$ will mean that $G$ is a \emph{rooted} graph with a root vertex $o$.

In the sequel, we set the empty sum equal to $0$ and the empty product equal to $1$. For $r \in \Z$, we abbreviate $\N_r = \{r,r+1,\ldots\}$. We use the standard notion for asymptotic order as $n\to\infty$: $a_n = \Theta(b_n)$ when $a_n$ is of the same order as $b_n$, $a_n = O(b_n)$ when $a_n$ is at most of order $b_n$, and $a_n = o(b_n)$ when $a_n$ is of smaller order than $b_n$. 


\subsection{Non-backtracking exploration}
\label{subsec:FPHL}

\paragraph{Random graphs.}

Let $G_{n}$ be a simple random graph with $n \in \N_{3}$ vertices labelled by $[n] = \{1,\ldots,n\}$ with minimum degree of at least $2$, defined on a probability space $(\Omega_{n},\mathcal{F}_{n},\P_{n})$. For each $n \in \N_{3}$ and $\omega\in\Omega_{n}$, we construct a non-backtracking random walk $X_{n}(\omega) = (X_{n,k}(\omega))_{k\in\N_0}$ with state space $[n]$ on a common probability space $(\Omega_{n}^{\mathrm{rw}},\mathcal{F}_{n}^{\mathrm{rw}},\P_{n}^{\mathrm{rw}})$ as follows. The initial state $X_{n,0}(\omega)$ is chosen uniformly at random from $[n]$. At each subsequent step, $X_{n,k+1}(\omega)$ is chosen uniformly at random from the neighbours of $X_{n,k}(\omega)$ in the graph $G_{n}(\omega)$, with the condition that it does not return to the state visited immediately before. 

Formally, if $P_{n}^{(k)}(i,j)$ is the probability that the non-backtracking random walk $X_n$ starting at vertex $i$ ends up at vertex $j$ after $k$ steps, then for $\omega \in \Omega_{n}$, $X_{n}(\omega) = (X_{n,k}(\omega))_{k\in\N_{0}}$ is a time-homogeneous stochastic process with initial distribution $\pi_{n,0}$ defined as
\begin{align*}
\label{initial}
\pi_{n,0}\big(\{i\}\big) = \frac{1}{n}, \qquad i\in[n],
\end{align*}
and transition kernels $(P_{n}^{(k)}(\cdot,\cdot)(\omega))_{k\in\N_0}$ with $P_{n}^{(k)}(\cdot,\cdot)(\omega)\colon\,[n]\times 2^{[n]} \rightarrow [0,1]$ defined by
\begin{align*}
P_{n}^{(k)}(i,A)(\omega) = \P_{n}^{\mathrm{rw}}\big\{X_{n,r+k}(\omega) \in A \mid X_{n,r}(\omega)=i\big\}
= \sum_{j\in A}P_{n}^{(k)}(i,j)(\omega), \qquad r\in\N_{0},
\end{align*}
where $P_{n}^{(0)}(i,j)(\omega) = \delta_{j}(\{i\})$, and $\delta_{x}$ denotes the Dirac measure at $x$. It is important to note that $(P_{n}^{(k)}(\cdot,\cdot)(\omega))_{k\in\N_0}$ does not satisfy the Chapman-Kolmogorov equation, as $X_{n}(\omega)$ is not a Markov chain.

To specify the transition probabilities more precisely, for $k\in \N$, define the set of non-backtracking $k$-walks as follows:
\begin{align*}
W_{n,k} &= \Big\{(i_0 ,i_1 ,\ldots ,i_k)\in[n]^{k+1}:\{i_l,i_{l+1}\}\in E(G_n)\text{ for all }0\leq l\leq k-1,\\
&\qquad \text{ and if } k\geq 2, \text{then also } i_{l^\prime -1}\neq i_{l^\prime +1} \text{ for all }
1\leq l^{\prime}\leq k-1\Big\}.
\end{align*}
Also, for $k\geq 2$ and $i,j\in [n]$, set
\begin{align*}
W_{n,k}(i,j) &= \Big\{(i_1 ,\ldots ,i_{k-1})\in[n]^{k-1}\colon\,(i ,i_1 ,\ldots ,i_{k-1},j) \in W_{n,k}\Big\}.
\end{align*}
Indeed, for $i,j\in [n]$,
\begin{equation}
\label{transitions1}
P_{n}(i,j) = P_{n}^{(1)}(i,j) = \frac{A_{i,j}^{(G_{n})}}{d_{i}^{(G_{n})}},
\end{equation}
and
\begin{equation}
\label{transitions2}
\begin{aligned} 
P_{n}^{(k)}(i,j) &= \sum_{(i_1 ,\ldots ,i_{k-1})\in W_{n,k}(i,j)}\dfrac{1}{\dn_{i}\,(\dn_{i_{1}}-1)\,\cdots\, (\dn_{i_{k-1}}-1)}\\
&= \sum_{(i_1 ,\ldots ,i_{k-1})\in [n]^{k-1}}\frac{A^{(G_{n})}_{i,i_{1}}}{\dn_{i}}\prod_{l=1}^{k-1}
\frac{A^{(G_{n})}_{i_{l},i_{l+1}}\mathbbm{1}_{\{i_{l-1}\neq i_{l+1}\}}}{d^{(G_{n})}_{i_{l}}-1}, \qquad k\geq 2,
\end{aligned}
\end{equation}
where we have taken $i_{0}=i$ and $i_{k}=j$ in the last line. 

To check that the transition probabilities are normalised, note that
\begin{align*}
\sum_{j \in [n]} A^{(G_{n})}_{i_{k-2},i_{k-1}}\frac{A^{(G_{n})}_{i_{k-1},j}
\mathbbm{1}_{\{i_{k-2}\neq j\}}}{d^{(G_{n})}_{i_{k-1}}-1}=A^{(G_{n})}_{i_{k-2},i_{k-1}}
\frac{\sum_{j \in [n]}A^{(G_{n})}_{i_{k-1},j}
\mathbbm{1}_{\{i_{k-2}\neq j\}}}{d^{(G_{n})}_{i_{k-1}}-A^{(G_{n})}_{i_{k-2},i_{k-1}}}
= A^{(G_{n})}_{i_{k-2},i_{k-1}}.
\end{align*}
For $k \geq 2$, this gives
\begin{align*}
\sum_{j \in [n]}P_{n}^{(k)}(i,j) = \sum_{(i_1 ,\ldots ,i_{k-1})\in [n]^{k-1}}\frac{A^{(G_{n})}_{i,i_{1}}}{\dn_{i}}
\prod_{l=1}^{k-2}\frac{A^{(G_{n})}_{i_{l},i_{l+1}}\mathbbm{1}_{\{i_{l-1}\neq i_{l+1}\}}}{d^{(G_{n})}_{i_{l}}-1},
\end{align*}
which for $k \geq 3$ equals
\begin{align*}
\sum_{(i_1 ,\ldots ,i_{k-2})\in [n]^{k-2}}\frac{A^{(G_{n})}_{i,i_{1}}}{\dn_{i}}
\prod_{l=1}^{k-3}\frac{A^{(G_{n})}_{i_{l},i_{l+1}}\mathbbm{1}_{\{i_{l-1}\neq i_{l+1}\}}}{d^{(G_{n})}_{i_{l}}-1}.
\end{align*}
Continuing this recursion, we see that
\begin{align*}
\sum_{j \in [n]} P_{n}^{(k)}(i,j) = \sum_{j \in [n]}\frac{A_{i,j}^{(G_{n})}}{d_{i}^{(G_{n})}} = 1,
\qquad k \in \N.
\end{align*}


\paragraph{Rooted random trees.} 

Consider an almost surely locally finite infinite rooted random tree $(G_{\infty},\phi)$ in which every vertex has at least one offspring, defined on a probability space $(\Omega_{\bar{\Lambda}},\mathcal{F}_{\bar{\Lambda}},\P_{\bar{\Lambda}})$, where we write $\bar{\Lambda}$ to denote the law of $(G_{\infty},\phi)$. Let $d_{j}$ be the number of \emph{offspring} of vertex $j \in V(G_{\infty})$, and $d^{(G_\infty)}_{j}$ be the \emph{degree} of $j \in V(G_{\infty})$. Note that $d^{(G_\infty)}_{\phi} = d_{\phi}$ and $d^{(G_\infty)}_{j} = d_{j}+1$ for $j\in V(G_{\infty})\setminus\{\phi\}$. For $\omega\in\Omega_{\bar{\Lambda}}$, we construct a non-backtracking random walk $X(\omega) = (X_{k}(\omega))_{k\in\N_0}$ on the tree $(G_{\infty},\phi)$ by setting $X_{0}(\omega)=\phi$ and, at level $k+1$, choosing $X_{k+1}(\omega)$ uniformly at random among the offspring of $X_{k}(\omega)$. Clearly, $X(\omega)$ is a Markov chain. For $k\in\N$, take $i_{0}=\phi$ and suppose that
\begin{align*}
I_{k} &= \Big\{(i_{1},\ldots ,i_{k})\in (V(G_{\infty}))^{k}\colon\, 
\{i_{l} ,i_{l+1}\} \in E(G_{\infty})\text{ for all }0\leq l\leq k-1,\\
&\quad\quad \text{ and if } k\geq 2, \text{then also }i_{l^\prime -1}\neq i_{l^\prime +1} \text{ for all } 1\leq l^{\prime}
\leq k-1\Big\}.
\end{align*}
Considering that $(G_{\infty},\phi)$ is an infinite tree, this implies that $I_{k}$ is non-empty for each $k\in\N$. For $j \in V(G_{\infty})$, let $P^{k}(\phi,j)$ be the probability that the non-backtracking random walk $X$ starting at the root $\phi$ ends up at vertex $j$ of the tree after $k$ steps, i.e.,
\begin{align*}
P^{k}(\phi, j) = \sum_{(i_{1},\ldots, i_{k-1}):\,(i_{1},\ldots,i_{k-1},j)\in I_{k}} \frac{1}{d_{\phi}
\prod_{l=1}^{k-1}d_{i_{l}}} = \sum_{(i_{1},\ldots, i_{k-1}):\,(i_{1},\ldots,i_{k-1},j)\in I_{k}} \frac{1}{d_{\phi}
\prod_{l=1}^{k-1}(d^{(G_\infty)}_{i_{l}}-1)}.
\end{align*}
Note that $\#\{(i_{1},\ldots, i_{k-1})\colon\,(i_{1},\ldots,i_{k-1},j)\in I_{k}\}\in\{0,1\}$.


\subsection{Backtracking exploration} 
\label{subsec:FPHL2}


\paragraph{Random graphs.} 

Let $G_{n}$ be a finite random graph without self-loops and isolated points on $n\in\N_{2}$ vertices labelled by $[n] = \{1,\ldots,n\}$, defined on a probability space $(\Omega_{n},\mathcal{F}_{n},\P_{n})$. Let us construct a backtracking random walk $X_{n}=(X_{n,k})_{k\in\N_0}$ on the graph $G_{n}$ by picking $X_{n,0}$ uniformly at random from $[n]$, and at level $k+1$ letting $X_{n,k+1}$ be chosen uniformly at random from the neighbours of $X_{n,k}$. In other words, for $\omega\in \Omega_{n}$, the stochastic process $X_{n}(\omega)=(X_{n,k}(\omega))_{k\in\N_{0}}$ is a time-homogeneous Markov chain on state space $[n]$ with initial distribution $\pi_{n,0}$ defined as
\begin{align*}
\pi_{n,0}\big(\{i\}\big) = \frac{1}{n}, \qquad i\in[n],
\end{align*}
and with transition matrix $P_{n}(\omega)=\big(P_{n}(i,j)(\omega)\big)_{i,j\in[n]}$ given by
\begin{equation}
\label{BT-transition}
P_{n}(i,j)(\omega) = \frac{A_{i,j}^{(G_{n})}(\omega)}{d_{i}^{(G_{n})}(\omega)}, \qquad i,j\in[n].
\end{equation}
We write $P_{n}^{(k)} = P_{n}^{k}$, the $k$-th power of $P_n$, to denote the $k$-step transition matrix of the backtracking random walk, which is the same notation as used for the non-backtracking random walk.  


\paragraph{Rooted random trees.} 
Let $(G_{\infty},\phi)$ be an almost surely locally finite non-trivial rooted random tree, defined on a probability space $(\Omega_{\bar{\Lambda}},\mathcal{F}_{\bar{\Lambda}},\P_{\bar{\Lambda}})$, where we use the same notations for both back-tracking and non-backtracking exploration. We construct a backtracking random walk $X=(X_{k})_{k\in\N_0}$ on the tree $(G_{\infty},\phi)$ as follows. Set $X_{0} = \phi$ and, at level $k+1$, select $X_{k+1}$ uniformly at random from the neighbours of $X_{k}$. 

For $k\in\N$, suppose that
\begin{align*}
J_{k} = \Big\{(i_{1},\ldots ,i_{k})\in (V(G_{\infty}))^{k}\colon\,\{i_{j} ,i_{j+1}\} \in E(G_{\infty}) 
\text{ for all } 0\leq j\leq k-1\Big\},
\end{align*}
where $i_{0} = \phi$. Since $(G_{\infty},\phi)$ is non-trivial, $J_{k}$ is non-empty for each $k\in\N$. For $j \in V(G_{\infty})$, let $P^{k}(\phi,j)$ be the probability that the backtracking random walk $X$ ends up at $j$ after $k$ steps, i.e.,
\begin{align*}
P^{k}(\phi, j) = \sum_{(i_{1},\ldots, i_{k-1})\colon\,(i_{1},\ldots,i_{k-1},j) \in J_{k}}  
\frac{1}{d_{\phi}\prod_{l=1}^{k-1}d^{(G_\infty)}_{i_{l}}}.
\end{align*}


\subsection{The multi-level friendship bias}
\label{subsec:MLFB}

As an extension of the concepts introduced in \cite{HHP}, we introduce the following.

\begin{definition}\label{FBdef}
{\rm (a) For $k\in\N$ and $i \in [n]$, the \emph{$k$-level friendship bias} for vertex $i$ is defined as
\begin{align*}
\Delta_{i,n}^{(k)} = \sum_{j \in [n]} P_{n}^{(k)}(i,j) \,d_{j}^{(G_{n})} - d_{i}^{(G_{n})}.
\end{align*}
(b) The $k$-level friendship bias for the root $\phi$ of the tree $(G_{\infty},\phi)$ is defined as
\begin{align*}
\Delta_{\phi}^{(k)} = \sum_{j\in V(G_{\infty})} P^{k}(\phi, j)\, d^{(G_\infty)}_{j}-d_{\phi}.
\end{align*}
Define $\mu^{(k)}_{\infty}\colon\,\mathcal{B}(\R) \to [0,1]$ to be the law of $\Delta_{\phi}^{(k)}$. Expectation with respect to $(G_{\infty},\phi)$ is denoted by $\mathbb{E}_{\bar{\Lambda}}$.}\hfill$\spadesuit$
\end{definition}

\noindent
Note that the $1$-level friendship bias $\Delta_{i,n}^{(1)}$ obtained through non-backtracking exploration and backtracking exploration are identical and coincide with the definition of $\Delta_{i,n}$ in \cite{HHP}.

\begin{definition}\label{FBdef1}
{\rm (a) The \emph{$k$-level quenched friendship bias empirical distribution} $\mu_{n}^{(k)}\colon\,\mathcal{B}(\R) \to [0,1]$ is defined as
\begin{align*}
\mu_{n}^{(k)}(\cdot) = \dfrac{1}{n}\sum_{i \in [n]} \delta_{\Delta_{i,n}^{(k)}}(\cdot),
\end{align*}
where $\mathcal{B}(\R)$ is the Borel $\sigma$-algebra on $\R$.\\ 
(b)  The \emph{$k$-level annealed friendship bias empirical distribution} $\tilde{\mu}_{n}^{(k)}\colon\,\mathcal{B}(\R) \to [0,1]$ is defined as
\begin{equation*}
\tilde{\mu}_{n}^{(k)}(\cdot) = \E_{n}\big[\mu_{n}^{(k)}(\cdot)\big],
\end{equation*}
where $\E_{n}$ denotes the expectation with respect to the random graph $G_{n}$.\\ 
(c) The \emph{average $k$-level friendship bias} is defined as
\begin{equation*}
\Delta_{[n]}^{(k)} = \frac{1}{n} \sum_{i \in [n]} \Delta_{i,n}^{(k)} = \int_{\R} x\,\mu_{n}^{(k)}(\dd x), 
\end{equation*}
with 
\begin{align*}
\E_{n}\big[\Delta_{[n]}^{(k)}\big] = \frac{1}{n}\sum_{i \in [n]} \E_{n}\big[\Delta_{i,n}^{(k)}\big] 
= \int_{\R} x\,\tilde{\mu}_{n}^{(k)}(\dd x).
\end{align*}
}\hfill$\spadesuit$
\end{definition}

\noindent
Note that, for fixed $\omega\in \Omega_{n}$, 
\begin{align*}
\Delta_{i,n}^{(k)}(\omega) = \E_{n}^{\mathrm{rw}}\Big[d_{X_{n,k}(\omega)}^{(G_{n}(\omega))}
-d_{X_{n,0}(\omega)}^{(G_{n}(\omega))}\mid X_{n,0}(\omega)=i\Big],
\qquad
\Delta_{[n]}^{(k)}(\omega) = \E_{n}^{\mathrm{rw}}\Big[d_{X_{n,k}(\omega)}^{(G_{n}(\omega))}
-d_{X_{n,0}(\omega)}^{(G_{n}(\omega))}\Big],
\end{align*} 
where $\E_{n}^{\mathrm{rw}}$ denotes expectation with respect to the random walk $X_{n}(\omega)$. 

We refer to the property that $\Delta_{[n]}^{(k)} \geq 0$ as the \emph{$k$-level friendship paradox}: in a social network with $n$ individuals on average the $k$-level friends of an individual have at least as many friends as the individual itself. The following theorem establishes this property and identifies when equality holds. 

\begin{theorem}
\label{thm0}
For each $k\in\N$, $\Delta_{[n]}^{(k)} \geq 0$, where for:
\begin{itemize}
\item[{\rm (1)}] 
non-backtracking exploration equality holds if and only if the degrees of the endpoints of each non-backtracking walk of length $k$ are equal;
\item[{\rm (2)}] 
backtracking exploration equality holds if and only if the degrees of the endpoints of each backtracking walk of length $k$ are equal. 
\end{itemize}
The former holds for all $k$ if each connected component of $G_n$ is regular. The latter holds for odd $k$ if and only if each connected component of $G_n$ is regular, and for even $k$ if and only if each connected component of $G_n$ is either regular or bi-regular bipartite.
\end{theorem}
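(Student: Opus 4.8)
The plan is to reduce the entire statement to one algebraic identity exhibiting $n\Delta_{[n]}^{(k)}$ as a sum of manifestly non-negative terms, and then read off each case from the vanishing of those terms. First I would introduce the \emph{weights}
\[
w^{(k)}_{i,j} := \dn_i\, P_n^{(k)}(i,j), \qquad i,j\in[n],
\]
and establish the key \emph{reversibility} $w^{(k)}_{i,j}=w^{(k)}_{j,i}$. For backtracking exploration this is detailed balance, $\dn_i P_n(i,j)=\An_{i,j}=\An_{j,i}=\dn_j P_n(j,i)$, which propagates to $P_n^{(k)}=P_n^k$ by a one-line induction on $k$. For non-backtracking exploration it follows from \eqref{transitions2}: for $k\ge2$ one has $w^{(k)}_{i,j}=\sum_{(i_1,\ldots,i_{k-1})\in W_{n,k}(i,j)}\prod_{l=1}^{k-1}(\dn_{i_l}-1)^{-1}$, and the reversal $(i_1,\ldots,i_{k-1})\mapsto(i_{k-1},\ldots,i_1)$ is a weight-preserving bijection $W_{n,k}(i,j)\to W_{n,k}(j,i)$, because both the adjacency constraints and the non-backtracking constraints $i_{l'-1}\neq i_{l'+1}$ are invariant under reversal; for $k=1$ one has $w^{(1)}_{i,j}=\An_{i,j}$ by \eqref{transitions1}. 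I would also record that, since every summand in the relevant expression is strictly positive (degrees being $\ge2$ for non-backtracking and $\ge1$ for backtracking), $w^{(k)}_{i,j}>0$ exactly when $i$ and $j$ are joined by a walk of the relevant type — non-backtracking, respectively arbitrary — of length $k$.

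Next, using $\sum_{j}P_n^{(k)}(i,j)=1$ (proved in the excerpt for non-backtracking, immediate for backtracking), I would compute
\begin{align*}
n\Delta_{[n]}^{(k)}
&=\sum_{i\in[n]}\Big(\sum_{j\in[n]}P_n^{(k)}(i,j)\,\dn_j-\dn_i\Big)
=\sum_{i,j\in[n]}\frac{w^{(k)}_{i,j}}{\dn_i}\,\dn_j-\sum_{i,j\in[n]}w^{(k)}_{i,j}\\
&=\sum_{i,j\in[n]}w^{(k)}_{i,j}\Big(\frac{\dn_j}{\dn_i}-1\Big)
=\tfrac12\sum_{i,j\in[n]}w^{(k)}_{i,j}\,\frac{(\dn_i-\dn_j)^2}{\dn_i\,\dn_j}\ \ge\ 0 ,
\end{align*}
the last step swapping $i\leftrightarrow j$ and using $w^{(k)}_{i,j}=w^{(k)}_{j,i}$. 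Hence $\Delta_{[n]}^{(k)}\ge0$, with equality if and only if $w^{(k)}_{i,j}(\dn_i-\dn_j)^2=0$ for all $i,j$, i.e.\ if and only if the endpoints of every length-$k$ walk of the relevant type have equal degrees; this is exactly (1) and (2). The assertion that non-backtracking equality holds for all $k$ when each connected component of $G_n$ is regular then follows immediately, since both endpoints of a non-backtracking $k$-walk lie in one component (and the condition is in fact necessary already for $k=1$).

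It remains to characterise backtracking equality. For odd $k$ and any edge $\{u,v\}$, the alternating walk $u,v,u,v,\ldots,v$ is a backtracking $k$-walk with endpoints $u,v$, so equality forces $\dn_u=\dn_v$ along every edge, i.e.\ every component is regular; the converse is trivial. For even $k\ge2$ and any pair $i,j$ with a common neighbour $v$, the walk $i,v,j,v,j,\ldots,j$ is a backtracking $k$-walk with endpoints $i,j$, so equality forces $\dn_i=\dn_j$ whenever $i$ and $j$ share a neighbour. The crux is then the graph-theoretic fact that this last property forces each connected component to be regular or bi-regular bipartite, which I would prove componentwise: all neighbours of a fixed vertex have a common degree (they pairwise share that vertex), and induction on the walk length then shows that vertices $x,y$ joined by an even walk satisfy $\dn_x=\dn_y$, while vertices $x,y$ joined by an odd walk satisfy ``$\dn_y$ equals the common degree of the neighbours of $x$''. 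If the component is bipartite, the even-walk statement makes each side of the bipartition degree-homogeneous, so the component is bi-regular bipartite; if it is non-bipartite, every vertex lies on an odd closed walk, so the odd-walk statement gives that $\dn_x$ equals the common degree of the neighbours of $x$ for every $x$, whence $\dn_x=\dn_{x'}$ along every edge and the component is regular. Conversely, if every component is regular or bi-regular bipartite, a backtracking walk of even length has both endpoints in one regular component or on the same side of a bipartition, so its endpoints have equal degree and equality holds at every even level.

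The step I expect to be the main obstacle is this last structural implication — deriving ``regular or bi-regular bipartite'' from ``vertices sharing a neighbour have equal degree'' — which requires careful parity and connectivity bookkeeping, in particular separating the bipartite and non-bipartite cases and handling the degenerate case in which the two alternating degrees coincide. The remaining ingredients — the reversal bijection for non-backtracking walks, the detailed-balance identity, and the symmetrisation into a sum of squares — are routine.
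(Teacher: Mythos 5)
Your proposal is correct and follows essentially the same route as the paper: the reversible weights $w^{(k)}_{i,j}=\dn_i P_n^{(k)}(i,j)$ and the swap $i\leftrightarrow j$ are just a repackaging of the paper's symmetrisation over the walk sets $W_{n,k}$ and $V_{n,k}$ into the same sum-of-squares identity, with the identical equality criterion. The characterisation for backtracking exploration (alternating edge traversal for odd $k$; reduction to length-$2$ walks and the bipartite/non-bipartite dichotomy via odd closed walks for even $k$) also matches the paper's argument.
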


Note that the equality condition of Theorem \ref{thm0} refers to each individual walk of length $k$, i.e., for every fixed walk of length $k$, the degrees of its two endpoints are equal, although this degree may differ between distinct walks.

\begin{remark}
\rm{In Theorem \ref{thm0}, the converse part of the characterization for non-backtracking exploration does not generally hold. For instance, in realisation (a) of Figure \ref{fig0}, although the degrees of the endpoints of every non-backtracking walk of length $3$ are equal, the corresponding connected component is not regular. Similarly, in realisation (b) of Figure \ref{fig0}, while the degrees of the endpoints of each non-backtracking walk of length $4$ are the same, the connected component is not regular.}\hfill$\spadesuit$

\vspace*{4mm}
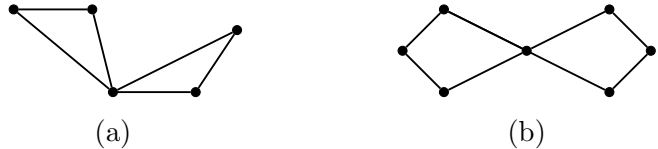
\begin{figure}[htb]
\centering
\begin{tikzpicture}[scale=1.1]
\node[draw, circle, fill=black, inner sep=1.2pt] (A) at (-1.2,1) {};
\node[draw, circle, fill=black, inner sep=1.2pt] (B) at (0,0) {};
\node[draw, circle, fill=black, inner sep=1.2pt] (C) at (-0.25,1) {};
\node[draw, circle, fill=black, inner sep=1.2pt] (D) at (1,0) {};
\node[draw, circle, fill=black, inner sep=1.2pt] (E) at (1.5,0.75) {};
\draw[-,line width=0.25mm] (A) -- (B) -- (C) -- (A);
\draw[-,line width=0.25mm] (B) -- (D);
\draw[-,line width=0.25mm] (B) -- (E) -- (D);
\node at (0,-0.5) {(a)};
\node[draw, circle, fill=black, inner sep=1.2pt] (F) at (3.5,0.5) {};
\node[draw, circle, fill=black, inner sep=1.2pt] (G) at (4,1) {};
\node[draw, circle, fill=black, inner sep=1.2pt] (H) at (5,0.5) {};
\node[draw, circle, fill=black, inner sep=1.2pt] (I) at (4,0) {};
\node[draw, circle, fill=black, inner sep=1.2pt] (J) at (6,1) {};
\node[draw, circle, fill=black, inner sep=1.2pt] (K) at (6.5,0.5) {};
\node[draw, circle, fill=black, inner sep=1.2pt] (L) at (6,0) {};
\draw[-,line width=0.25mm] (F) -- (G) -- (H) -- (I) -- (F);
\draw[-,line width=0.25mm] (H) -- (J) -- (K) -- (L) -- (G);
\node at (5,-0.5) {(b)};
\end{tikzpicture}
\caption{\small Two examples of non-regular connected components for which equality in the $k$-level friendship paradox holds for $k=3$, respectively, $k=4$.}
\label{fig0}
\end{figure}
\end{remark}


\section{Short-level friendship bias for large sparse graphs}
\label{sec:shortlevel}

The asymptotic behaviour of the $k$-level quenched friendship bias empirical distribution $\mu_{n}^{(k)}$ as $n\to\infty$ for fixed $k$ provides insights into the $k$-level friendship paradox within a large network. Theorem \ref{thm1} below investigates the relationship between $\mu_{n}^{(k)},\tilde{\mu}_{n}^{(k)}$ and $\mu^{(k)}_{\infty}$ for locally tree-like random graphs.

We briefly recall the notion of local convergence for random graphs, following \cite[Section 2]{HHP}. Additional details can be found in \cite[Chapter 2]{RvdH2}.

For a rooted graph $(H,o)$, let $B_{r}^{(H)}(o) = ((V(B_{r}^{(H)}(o)),E(B_{r}^{(H)}(o))),o)$ be a rooted subgraph with root vertex $o$, defined as
\begin{equation*}
\begin{aligned}
V(B_{r}^{(H)}(o)) &= \big\{i\in V(H)\colon\,\mathrm{dist}_{H}(o,i)\leq r\big\},\\
E(B_{r}^{(H)}(o)) &= \big\{\{i,j\}\in E(H)\colon\,\max\{\mathrm{dist}_{H}(o,i),\mathrm{dist}_{H}(o,j)\}\leq r\big\},
\end{aligned}
\end{equation*}
where $\mathrm{dist}_{H}$ represents the graph distance in $H$. We write $H_1 \simeq H_2$ to denote that the two graphs $H_1$ and $H_2$ are \emph{isomorphic}. 

Let $\mathscr{G}$ be the set of all connected locally finite rooted graphs equipped with the metric
\begin{align*}
\mathrm{d}_{\mathscr{G}}\big((H_{1},o_{1}),(H_{2},o_{2})\big)
	= \frac{1}{1+\sup\{r\geq 0\colon\,B_{r}^{(H_1)}(o_1) \simeq B_{r}^{(H_2)}(o_2)\}}.
\end{align*}
Note that $\mathrm{d}_{\mathscr{G}}$ can be considered a metric, following the convention that two connected locally finite rooted graphs $(H_{1},o_{1})$ and $(H_{2},o_{2})$ are identified when $(H_{1},o_{1}) \simeq (H_{2},o_{2})$. 

\begin{definition}
\label{deflc}
{\rm Let $(H_{n})_{n\in\N}$ be a sequence of finite random graphs. 
For each $n\in\N$, associate to $H_{n}$ the rooted graph $\mathscr{C}(H_{n},U_{n})$, defined as the connected component containing a uniformly chosen vertex $U_{n}\in V(H_{n})$, which serves as the root.
\begin{itemize}
\item[(a)] 
$H_{n}$ converges \emph{locally weakly} to $(H,o)\in \mathscr{G}$ with law $\tilde{\nu}$ if, for every bounded and continuous function $h\colon\,\mathscr{G}\to\R$,
\begin{align*}
	\E_{\tilde{\nu}_{n}}\big[h(\mathscr{C}(H_{n} ,U_{n}))\big] \to \E_{\tilde{\nu}}\big[h((H,o))\big],
\end{align*}
where $\E_{\tilde{\nu}_{n}}$ is the expectation with respect to the random vertex $U_{n}$ and the random graph $H_{n}$ with joint law $\tilde{\nu}_{n}$, while $\E_{\tilde{\nu}}$ is the expectation with respect to $(H, o)$ with law $\tilde{\nu}$.
\item[(b)] 
$H_{n}$ converges \emph{locally in probability} to $(H,o)\in \mathscr{G}$ with law $\tilde{\nu}$ if, for every bounded and continuous function $h\colon\,\mathscr{G}\to\R$,
\begin{align*}
	\E_{\tilde{\nu}_{n}}\big[h(\mathscr{C}(H_{n} ,U_{n})) \mid H_{n}\big] \prob\E_{\tilde{\nu}}\big[h((H,o))\big].
\end{align*}
\end{itemize}
}\hfill$\spadesuit$
\end{definition}

\noindent
Note that in Definition~\ref{deflc} the law $\tilde{\nu}$ of the limiting rooted tree $(H,o)$ is necessarily deterministic when the convergence is locally weak. In contrast, when the convergence is locally in probability $\tilde{\nu}$ may depend on the realisation of $(H_{n})_{n\in\N}$, and hence the limiting law may itself be random, for example, a mixture of random environment settings.

\begin{theorem}
\label{thm1}
For both types of exploration and each $k\in\N$:
\begin{itemize}
\item[\rm{(a)}] 
If $G_{n}$ converges locally in probability to $(G_{\infty},\phi)$ as $n\to\infty$, then $\mu_{n}^{(k)} \weak \mu^{(k)}_{\infty}$ as $n\to\infty$ in probability. In particular, as $n\to\infty$,
\begin{align*}
\mu_{n}^{(k)} (A) \prob \mu^{(k)}_{\infty} (A), \qquad 
\tilde{\mu}_{n}^{(k)} (A) \to \mu^{(k)}_{\infty} (A) \qquad \forall\,A \in \mathcal{B}(\R).
\end{align*}
\item[\rm{(b)}] 
If $G_{n}$ converges locally weakly to $(G_{\infty},\phi)$ as $n\to\infty$, then $\tilde{\mu}_{n}^{(k)} \weak \mu^{(k)}_{\infty}$ as $n\to\infty$. In particular, as $n\to\infty$,
\begin{align*}
\tilde{\mu}_{n}^{(k)} (A) \to \mu^{(k)}_{\infty} (A) \qquad \forall\,A \in \mathcal{B}(\R).
\end{align*}
\end{itemize}
\end{theorem}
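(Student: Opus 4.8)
\medskip
\noindent\textbf{Proof strategy.} The plan is to realise both $\mu_n^{(k)}$ and $\mu_\infty^{(k)}$ as images, under one and the same continuous functional on the space of rooted graphs, of the empirical neighbourhood measure of $G_n$ and of the law $\bar\mu$ of the local limit, and then to transport local convergence through this functional. Write $B_r^{(G)}(v)$ for the rooted isomorphism class of the ball of radius $r$ around $v$ in $G$, and $U(G_n)=\tfrac1n\sum_{i\in[n]}\delta_{(G_n,i)}$ for the random empirical neighbourhood measure of $G_n$. The first step is to observe that, for both types of exploration and under the standing assumptions that make the walks well defined, $\Delta_{i,n}^{(k)}$ depends on $G_n$ only through $B_{k+1}^{(G_n)}(i)$: a $k$-step walk (backtracking or non-backtracking) started at $i$ can visit only vertices at graph-distance at most $k$ from $i$, the degrees of the vertices visited at steps $0,\dots,k-1$ are read off from $B_{k}^{(G_n)}(i)$ and the degree of a vertex visited at step $k$ from $B_{k+1}^{(G_n)}(i)$, and the transition probabilities of the walk are determined by the induced subgraph on $B_{k}^{(G_n)}(i)$ together with these degrees --- note that even though for non-backtracking exploration the walk is not Markov, formula~\eqref{transitions2} still writes $P_n^{(k)}(i,j)$ as a finite sum over non-backtracking $k$-walks whose terms involve only degrees along the walk. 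Hence there is a deterministic map $\Psi_k$ on finite rooted graphs with $\Delta_{i,n}^{(k)}=\Psi_k\big(B_{k+1}^{(G_n)}(i)\big)$, and the formulas of Definition~\ref{FBdef} show that the same $\Psi_k$ yields $\Delta_\phi^{(k)}=\Psi_k\big(B_{k+1}^{(G_\infty)}(\phi)\big)$ (on a rooted tree the two descriptions of the walk reduce to the tree formulas for $P^{k}(\phi,\cdot)$), so that $\mu_\infty^{(k)}=(\Psi_k)_\ast\bar\mu$ and $\mu_n^{(k)}=(\Psi_k)_\ast U(G_n)$. The key point is that any map factoring through the truncation $(G,v)\mapsto B_{k+1}^{(G)}(v)$ is automatically continuous in the local topology --- if $(G_m,v_m)\to(G,v)$ then $B_{k+1}^{(G_m)}(v_m)\cong B_{k+1}^{(G)}(v)$ for all large $m$, so such a map is eventually constant along the sequence --- whence $f\circ\Psi_k$ is bounded and continuous for every bounded continuous $f\colon\R\to\R$, and $\mathbbm 1_A\circ\Psi_k$ is bounded and continuous for every $A\in\mathcal B(\R)$, even though $\mathbbm 1_A$ itself is not continuous on $\R$.

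For part (a), local convergence in probability of $G_n$ to $(G_\infty,\phi)$ gives $\int h\,\dd U(G_n)\prob\int h\,\dd\bar\mu$ for every bounded continuous $h$ on the space of rooted graphs (see \cite[Chapter~2]{RvdH2}). Taking $h=f\circ\Psi_k$ yields $\int f\,\dd\mu_n^{(k)}\prob\int f\,\dd\mu_\infty^{(k)}$ for every bounded continuous $f$, i.e. $\mu_n^{(k)}\weak\mu_\infty^{(k)}$ in probability; taking $h=\mathbbm 1_A\circ\Psi_k$ yields $\mu_n^{(k)}(A)\prob\mu_\infty^{(k)}(A)$ for every $A\in\mathcal B(\R)$. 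Since $\mu_n^{(k)}(A)\in[0,1]$ converges in probability to the constant $\mu_\infty^{(k)}(A)$, bounded convergence gives $\tilde\mu_n^{(k)}(A)=\E_n\big[\mu_n^{(k)}(A)\big]\to\mu_\infty^{(k)}(A)$.

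For part (b), local weak convergence of $G_n$ to $(G_\infty,\phi)$ gives $\E_n\big[\int h\,\dd U(G_n)\big]\to\int h\,\dd\bar\mu$ for every bounded continuous $h$. By Fubini (valid because $h$ is bounded) and the definition of $\tilde\mu_n^{(k)}$, $\int f\,\dd\tilde\mu_n^{(k)}=\E_n\big[\int (f\circ\Psi_k)\,\dd U(G_n)\big]$, so $h=f\circ\Psi_k$ gives $\int f\,\dd\tilde\mu_n^{(k)}\to\int f\,\dd\mu_\infty^{(k)}$ for every bounded continuous $f$, i.e. $\tilde\mu_n^{(k)}\weak\mu_\infty^{(k)}$, and $h=\mathbbm 1_A\circ\Psi_k$ gives $\tilde\mu_n^{(k)}(A)\to\mu_\infty^{(k)}(A)$ for every $A\in\mathcal B(\R)$.

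The main obstacle is the first step: one must verify carefully that on a general, non-tree graph a $k$-step non-backtracking walk --- which may traverse short cycles and revisit vertices --- nevertheless leaves $\Delta_{i,n}^{(k)}$ a function of $B_{k+1}^{(G_n)}(i)$ alone, and that the functional $\Psi_k$ so obtained is literally the one producing $\Delta_\phi^{(k)}$ on rooted trees, so that $(\Psi_k)_\ast\bar\mu$ indeed equals $\mu_\infty^{(k)}$; one should also record that the standing hypotheses (minimum degree at least $2$ for non-backtracking exploration, no self-loops and no isolated vertices for backtracking exploration) are inherited by $\bar\mu$-almost every $(G_\infty,\phi)$, being themselves local properties. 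Everything after that is a routine unwinding of the two notions of local convergence together with the elementary fact that bounded functions of a bounded-radius neighbourhood are continuous in the local topology.
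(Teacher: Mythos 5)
Your proposal is correct and follows essentially the same route as the paper: the paper also writes $\Delta_{o,k}^{(G)}$ as a functional $g$ of the rooted graph that is determined by a bounded-radius ball, proves continuity of $f\circ g$ and of $\mathbbm 1_{\{\Delta_{o,k}^{(G)}\in A\}}$ from the fact that closeness in $\mathrm{d}_{\mathscr{G}}$ forces the relevant balls to be isomorphic, and then feeds these test functions into the definitions of local convergence in probability (part (a)) and local weak convergence plus Fubini (part (b)), finishing the annealed statements with dominated convergence. Your explicit observation that the radius must be $k+1$ (to read off the degree of the endpoint of the walk) is a small point the paper's write-up glosses over, but the underlying argument is identical.
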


\noindent
Theorem \ref{thm1} shows that, for locally tree-like random graphs satisfying the mentioned conditions, we have $\mu_{n}^{(k)}([0,\infty)) \prob \mu^{(k)}_{\infty}([0,\infty))$ for each $k\in\N$. 

\begin{remark}
\rm{A result similar to Theorem~\ref{thm1} also holds for more general connected and almost surely locally finite rooted random graphs (not necessarily rooted random trees), which provides an extension of \cite[Theorems~2.2 and 2.3]{HHP} to arbitrary fixed $k \in \N$. This extension requires additional assumptions on the graphs (similar to those outlined in Section~\ref{sec:RWRG} for the tree setting) to ensure that the $k$-step exploration is well defined. However, the primary focus of the present paper is on rooted random trees.}
\end{remark}

Theorem \ref{thm2} below identifies the limit of $\mu^{(k)}_{\infty}$ as $k\to\infty$. We label the vertices in $V(G_{\infty})\setminus\{\phi\}$ by natural numbers.

\begin{theorem}
\label{thm2}
$\mbox{}$
\begin{itemize}
\item[\rm{(a)}] 
Consider the non-backtracking exploration. 
Let $G_n$ converge locally in probability to $(G_\infty,\phi)$ as $n\to\infty$, and let the minimum degree in $G_n$ be at least $3$ for all sufficiently large $n$. Also, let
$(G_{\infty},\phi)$ have a deterministic law $\bar{\Lambda}$ and satisfy the following conditions:
\begin{itemize}
\item[{\rm (I)}] 
$d_{\phi}$ is independent of $(d_{i})_{i\in\N}$.
\item[{\rm (II)}] 
$(d_{i})_{i\in\N}$ are i.i.d.\ with a finite second moment.
\end{itemize}
Under these conditions, as $k\to\infty$,
\begin{align*}
\Delta_{\phi}^{(k)} \prob \Delta_{\phi} = \frac{m^{(2)}}{m^{(1)}}-d_{\phi}
\end{align*}
with $m^{(1)} = \mathbb{E}_{\bar{\Lambda}}[d_{\phi}]$ and $m^{(2)} = \mathbb{E}_{\bar{\Lambda}}[d_{\phi}^{2}]$. In particular, $\mu^{(k)}_{\infty} \weak \mu$ as $k\to\infty$, where 
\begin{equation}
\label{eq:mudef}
\mu \text{ is the law of } \Delta_{\phi}.
\end{equation}
\item[\rm{(b)}] 
Consider the backtracking exploration. Let $(G_{\infty},\phi)$ be almost surely finite and non-bipartite, with law $\bar{\Lambda}$. Then, as $k\to\infty$,
\begin{align*}
\Delta_{\phi}^{(k)} \as \Delta_{\phi}^{\star} = \frac{\sum_{j\in V(G_{\infty})}(d_{j}^{(G_{\infty})})^2}{\sum_{j\in V(G_{\infty})}d_{j}^{(G_{\infty})}}-d_{\phi}.
\end{align*}
In particular, if $\bar{\Lambda}$ is deterministic, then $\mu^{(k)}_{\infty} \weak \mu^{\star}$ as $k\to\infty$, where 
\begin{equation}
\label{eq:mustardef}
\mu^{\star} \text{ is the law of } \Delta_{\phi}^{\star}.
\end{equation}
\end{itemize}
\end{theorem}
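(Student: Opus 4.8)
\medskip

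\noindent\textbf{Part (a): non-backtracking exploration.}
The plan is to exploit the simple structure of non-backtracking walks on a tree: starting from $\phi$, the walk at step $k$ sits at a uniformly chosen vertex at generation $k$, where ``uniformly'' is weighted by the product of offspring numbers along the unique path. First I would write
\begin{align*}
\Delta_\phi^{(k)} = \sum_{j\colon |j|=k} P^k(\phi,j)\, d^{(G_\infty)}_j - d_\phi,
\end{align*}
and observe that for $j$ at generation $k$, $d^{(G_\infty)}_j = d_j + 1$, while $P^k(\phi,j) = \tfrac{1}{d_\phi}\prod_{l=1}^{k-1} d_{i_l}^{-1}$ along the path. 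The key probabilistic observation is that the pair (offspring number of the walk's current location, its parent's offspring number) evolves as a size-biased process: conditionally on the walk being at a vertex with $d$ offspring, it moves to a given child with probability $1/d$, so the offspring count of $X_{k}$, call it $D_k := d_{X_k}$, has, under $\P_\phi^{\rw}$ averaged suitably, the \emph{size-biased} offspring law after one step. Concretely, I would show that for $k\ge 2$, conditionally on the path so far, $\sum_{j} P^k(\phi,j)\, d_j$ is exactly $\E^{\rw}[d_{X_k} \mid \text{path to } X_{k-1}]$, and since children of $X_{k-1}$ are i.i.d.\ with the law of $d_\phi$ (by (I)--(II), all $(d_i)_{i\in\N}$ are i.i.d.\ copies of a common law, independent of $d_\phi$), this conditional expectation equals $m^{(1)} = \E_{\bar\mu}[d_\phi]$ \emph{for every} realization of the path — hence $\E^{\rw}[d_{X_k}^{(G_\infty)}] $ is not what converges, rather one step further. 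The correct statement: let $Y_k = d_{X_k}$; then $(Y_k)_{k\ge 1}$ is a Markov chain on $\N$ with transition $p(d, d') \propto d' \cdot \P(d_\phi = d')$ — the size-biasing — and $Y_1$ has law $\P(d_\phi=\cdot)$. This chain has stationary distribution the size-biased law $\hat\pi(d) = d\,\P(d_\phi=d)/m^{(1)}$, and since it is i.i.d.-driven it actually reaches stationarity \emph{in one step}: $Y_k \sim \hat\pi$ for all $k\ge 2$. Therefore $\sum_j P^k(\phi,j) d_j^{(G_\infty)} = \E^{\rw}[Y_{k-1}] + 1 \to \sum_d (d+1)\hat\pi(d) = m^{(2)}/m^{(1)}$ — wait, this needs more care since $\Delta_\phi^{(k)}$ is a random variable (a function of $(G_\infty,\phi)$), not its mean. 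The honest route: $\Delta_\phi^{(k)} = \E^{\rw}[d^{(G_\infty)}_{X_k}] - d_\phi$ where the inner expectation is over the walk only, for fixed tree; I would compute this inner expectation by conditioning successively on $X_1, X_2, \ldots$, using that given $X_{k-1}$ the next-step average offspring is the \emph{empirical} average of the $d_{X_{k-1}}$ i.i.d.\ children. A law of large numbers as $k\to\infty$ (the walk visits ``fresh'' independent offspring tuples each step) then forces $\E^{\rw}[d^{(G_\infty)}_{X_k}] \to m^{(2)}/m^{(1)}$ in probability. Making this LLN rigorous — controlling that the product weights $\prod d_{i_l}^{-1}$ don't concentrate mass on atypical paths, which is where the minimum-degree $\ge 3$ and finite-second-moment hypotheses enter — is the main technical obstacle.

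\medskip

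\noindent\textbf{Part (b): backtracking exploration on a.s.\ finite non-bipartite tree.}
Here the argument is cleaner and essentially deterministic given the tree. Conditionally on $(G_\infty,\phi)$ being a fixed finite non-bipartite connected graph, the backtracking random walk is an irreducible aperiodic finite Markov chain with transition matrix $P(i,j) = A_{i,j}^{(G_\infty)}/d_i^{(G_\infty)}$, whose reversible stationary distribution is $\pi(j) = d_j^{(G_\infty)} / \sum_{l} d_l^{(G_\infty)}$. Hence by the convergence theorem for finite ergodic Markov chains, $P^k(\phi, j) \to \pi(j)$ as $k\to\infty$ for every $j$, and since $V(G_\infty)$ is finite this gives
\begin{align*}
\Delta_\phi^{(k)} = \sum_{j\in V(G_\infty)} P^k(\phi,j)\, d_j^{(G_\infty)} - d_\phi \longrightarrow \sum_j \pi(j)\, d_j^{(G_\infty)} - d_\phi = \frac{\sum_j (d_j^{(G_\infty)})^2}{\sum_j d_j^{(G_\infty)}} - d_\phi = \Delta_\phi^\star.
\end{align*}
This convergence holds for $\P_{\bar\mu}$-almost every realization of the tree (the non-bipartite, a.s.-finite hypotheses guarantee the chain is ergodic a.s.), which is exactly the a.s.\ statement. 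The weak convergence $\mu_\infty^{(k)} \weak \mu^\star$ then follows because a.s.\ convergence implies convergence in distribution, and by definition $\mu_\infty^{(k)}$ is the law of $\Delta_\phi^{(k)}$ and $\mu^\star$ the law of $\Delta_\phi^\star$. One small point to address: if a component happened to be a single edge or bipartite it would fail, but the hypothesis rules this out; and finiteness is what lets us exchange the $k\to\infty$ limit with the finite sum over $j$ without any domination argument. The only mild care needed is to note measurability of $\Delta_\phi^\star$ as a function of the random tree and that ``a.s.\ finite and non-bipartite'' is a full-measure event on which the Markov chain argument applies verbatim.

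\medskip

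\noindent For both parts, the passage from convergence of $\Delta_\phi^{(k)}$ (in probability in (a), a.s.\ in (b)) to weak convergence $\mu_\infty^{(k)} \weak \mu$ resp.\ $\mu^\star$ is immediate since convergence in probability implies convergence in distribution and $\mu_\infty^{(k)}$, $\mu$, $\mu^\star$ are by definition the corresponding laws. I expect Part (a)'s law-of-large-numbers-along-the-walk step to be the crux; Part (b) is a routine application of finite Markov chain ergodicity.
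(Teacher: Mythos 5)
Your Part (b) is correct and coincides with the paper's proof: for a.e.\ realisation the tree is a fixed finite non-bipartite graph, the backtracking walk is ergodic with stationary law $\pi(j)\propto d_j^{(G_\infty)}$, and the finite sum passes to the limit. No issues there.

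Part (a), however, has a genuine gap, in two places. First, the mechanism you propose for why the limit is $m^{(2)}/m^{(1)}$ is wrong. The non-backtracking walk chooses a child of $X_{k-1}$ \emph{uniformly} among its $d_{X_{k-1}}$ children, and under (II) the offspring counts of those children are i.i.d.\ copies of $d_1$; hence $d_{X_k}$ is simply a fresh copy of $d_1$ on average, and no size-biasing is introduced by the walk dynamics. Your claimed transition kernel $p(d,d')\propto d'\,\P(d_\phi=d')$ does not describe this chain, and your parenthetical assertion that the $(d_i)_{i\in\N}$ have ``the law of $d_\phi$'' is not implied by (I)--(II) and is in fact false for the local limits in question. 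The natural limit of $\sum_j P^{k}(\phi,j)\,d^{(G_\infty)}_j$ is $\E_{\bar\mu}[d_1]+1$, and the identity $\E_{\bar\mu}[d_1]+1=m^{(2)}/m^{(1)}$ is \emph{not} a consequence of the walk at all: it comes from unimodularity of the local limit (involution invariance / mass transport), which is the content of the paper's Lemma \ref{lem0} and shows that the law of $d_1$ is the size-biased version of the law of $d_\phi$. You never invoke this, and without it the stated limit cannot be derived from (I)--(II) alone. Second, the actual convergence statement --- that the tree-measurable random variable $\chi_k=\sum_{j}P^{k}(\phi,j)\,d^{(G_\infty)}_j$ converges in probability --- is precisely the step you defer as ``the main technical obstacle.'' The paper settles it with a short second-moment argument: by the independence of $d_j$ and $P^k(\phi,j)$ one gets $\E_{\bar\mu}[\chi_k]=\E_{\bar\mu}[d_1]+1$ exactly, while
\begin{equation*}
\E_{\bar\mu}\big[\chi_k^2\big]\le\big(\E_{\bar\mu}[d_1]+1\big)^2+\E_{\bar\mu}[d_1^2]\,\E_{\bar\mu}\Big[\sup_{j}P^{k}(\phi,j)\Big],
\end{equation*}
and the minimum-degree-$3$ hypothesis forces every non-root offspring count to be at least $2$ almost surely, whence $\sup_j P^{k}(\phi,j)\le 2^{-k}$ and Chebyshev finishes. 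You correctly identified where the minimum-degree hypothesis must enter, but you did not supply this estimate, so the concentration claim remains unproved in your write-up.
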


We will see in Lemma \ref{lem0} that, under the conditions of Theorem \ref{thm2}(a), the distribution of $d_1$ is the \emph{size-biased} version of the distribution of $d_\phi$. Hence, a finite first moment of $d_1$ implies a finite second moment of $d_\phi$.

\begin{remark}
{\rm (1) A general form of Theorem \ref{thm2}(a) can be formulated for both the deterministic and the random case of the measure $\bar{\Lambda}$, where the limit is given by $\Delta_{\phi} =\mathbb{E}_{\bar{\Lambda}}[d_{1}]+1-d_{\phi}$. When $\bar{\Lambda}$ is deterministic, this aligns with the classic notion of unimodularity of $\bar{\Lambda}$, whereby $\mathbb{E}_{\bar{\Lambda}}[d_{1}]+1=\frac{m^{(2)}}{m^{(1)}}$ (see Lemma \ref{lem0}). Since for most natural locally tree-like random graphs $\bar{\Lambda}$ is deterministic, we will focus on this case as a natural choice. For the notion of unimodular random measured metric spaces, we refer the reader to \cite{Kh}.\\
(2) Theorem \ref{thm2} shows that, interestingly, $\mu \neq \mu^{\star}$, i.e., non-backtracking differs from backtracking. The assumption in Theorem \ref{thm2}(b) that the random tree be almost surely finite is restrictive, because it means the local limit is a subcritical tree. We do not know whether this assumption can be dropped.}\hfill$\spadesuit$
\end{remark}


\section{Long-level friendship bias for large sparse graphs}
\label{sec:longlevel}

In Section~\ref{subsec:convexpl} we show that both explorations converge to an equilibrium. In Section~\ref{subsec:ll} we identify the limit of $\mu^{(k)}_n$ as $k \to \infty$ both for $n$ fixed and followed by $n\to\infty$. In Section~\ref{subsec:lazy} we look at lazy exploration that can stay put with a positive probability. 


\subsection{Convergence of the explorations}
\label{subsec:convexpl}

Consider the backtracking exploration and $G_{n}$ as defined in Section \ref{subsec:FPHL2}. For $\omega\in \Omega_{n}$, let $G_{n}(\omega)$ be connected. Since $G_{n}(\omega)$ is connected and finite, the random walk $X_{n}(\omega)$ is a positive recurrent irreducible homogeneous Markov chain. Therefore it has a unique stationary distribution \cite[Theorem 3.2.6]{B}. The stationary distribution, denoted by $\pi_{n}(\cdot)(\omega)$, can be derived from the global balance equation as follows:
\begin{align}\label{statform}
\pi_{n}(i)(\omega) = \pi_{n}(\{i\})(\omega)=\dfrac{\dn_{i}(\omega)}{\sum_{j\in [n]}\dn_{j}(\omega)}, \qquad i\in [n].
\end{align}
The random walk $X_{n}(\omega)$ is aperiodic if and only if $G_{n}(\omega)$ is non-bipartite. Thus, if we also assume that $G_{n}(\omega)$ is non-bipartite, then $X_{n}(\omega)$ is ergodic; in particular,
\begin{align}
\label{statlim}
\lim_{k\rightarrow\infty}P_{n}^{(k)}(i,j)(\omega)=\pi_{n}(j)(\omega)
\end{align}
for all $i,j\in[n]$ \cite[Theorem 1.8.3]{N}. 

Next, consider the non-backtracking exploration and $G_{n}$ as defined in Section \ref{subsec:FPHL}. Here, we employ the non-backtracking random walks on the directed edges, as in \cite{K}, to establish that the limit in \eqref{statlim} holds in this context as well.
Let $\overset{\rightarrow}{\{i,j\}}$ denote a directed edge from $i$ to $j$, where the `order' of the vertices is important. Define the set of directed edges by
\begin{align*}
\vec{E}(G_{n}) = \Big\{\overset{\rightarrow}{\{i,j\}}: \{i,j\}\in E(G_{n})\Big\}.
\end{align*}
Indeed $|\vec{E}(G_{n})|=2|E(G_{n})|=\sum_{j\in [n]}\dn_{j}$. For $\omega\in \Omega_{n}$, define a matrix 
\[\vec{P}_{n}(\omega)=\Big(\vec{P}_{n}\big(\{\overset{\rightarrow}{i_{1},i_{2}}\},\{\overset{\rightarrow}{j_{1},j_{2}}\}\big)(\omega)\Big)_{\{\overset{\rightarrow}{i_{1},i_{2}}\},\{\overset{\rightarrow}{j_{1},j_{2}}\}\in\vec{E}(G_{n}(\omega))}\]
by
\begin{align*}
\vec{P}_{n}\big(\{\overset{\rightarrow}{i_{1},i_{2}}\},\{\overset{\rightarrow}{j_{1},j_{2}}\}\big)(\omega) = \left\{
\begin{array}{ll}
\frac{1}{\dn_{i_2}(\omega)-1}, & \text{if }i_{2}=j_{1}\text{ and }i_{1}\neq j_{2}, \\
0, & \text{otherwise}.
\end{array}
\right.
\end{align*}
Note that $\vec{P}_{n}(\omega)$ is a one-step transition probability matrix for a random walk that operates as a Markov chain on the directed edges of the directed version of $G_{n}(\omega)$, in which each edge has been replaced by two directed edges, one in each direction. It can be easily proved that $\vec{P}_{n}(\omega)$ is doubly stochastic as for each $\{\overset{\rightarrow}{j_{1},j_{2}}\}\in\vec{E}(G_{n}(\omega))$,
\begin{align*}
\sum_{\{\overset{\rightarrow}{i_{1},i_{2}}\}\in\vec{E}(G_{n}(\omega))}\vec{P}_{n}\big(\{\overset{\rightarrow}{i_{1},i_{2}}\},\{\overset{\rightarrow}{j_{1},j_{2}}\}\big)(\omega) 
=\sum_{\substack{i_1 \in [n] \\ \{i_1, j_1\} \in E(G_n(\omega)) \\ i_{1}\neq j_{2}}} \frac{1}{\dn_{j_1}(\omega)-1}=1.
\end{align*}
Therefore, the uniform distribution on $\vec{E}(G_{n})(\omega)$ is a stationary distribution for the corresponding random walk. If $\vec{P}_{n}(\omega)$ is irreducible and aperiodic, then for each $\{\overset{\rightarrow}{i_{1},i_{2}}\},\{\overset{\rightarrow}{j_{1},j_{2}}\}\in\vec{E}(G_{n}(\omega))$,
\begin{align*}
\vec{P}_{n}^{k}\big(\{\overset{\rightarrow}{i_{1},i_{2}}\},\{\overset{\rightarrow}{j_{1},j_{2}}\}\big)(\omega) \to \dfrac{1}{\sum_{j\in [n]}\dn_{j}(\omega)},\quad\quad k\to\infty ,
\end{align*}
 where $\vec{P}_{n}^{k}(\omega)$ is the $k$th power of the matrix $\vec{P}_{n}(\omega)$ \cite[Theorem 1.8.3]{N}. 
In this way, since for every $i_{1},j_{1}\in [n]$,
 \begin{align*}
 \sum_{\substack{i_{2} \in [n] \\ \{i_{1},i_{2}\} \in E(G_n(\omega))}}\sum_{\substack{j_{2} \in [n] \\\{j_{1},j_{2}\}\in E(G_{n}(\omega))}}\vec{P}_{n}^{k}\big(\{\overset{\rightarrow}{i_{1},i_{2}}\},\{\overset{\rightarrow}{j_{1},j_{2}}\}\big)(\omega) =\dn_{i_{1}}(\omega)P_{n}^{(k)}(i_{1},j_{1})(\omega),
 \end{align*}
we can conclude \eqref{statlim} for all $i,j\in[n]$. 


\subsection{Long-level exploration}
\label{subsec:ll} 

Theorem \ref{thm3} below identifies the limit of $\mu_{n}^{(k)}$ as $k\to\infty$ and specifies the behaviour of this limit as $n \to \infty$.

\begin{theorem}
\label{thm3}
For backtracking exploration, assume that $G_n$ is almost surely connected and non-bipartite. For non-backtracking exploration, assume that $\vec{P}_{n}$ is almost surely irreducible and aperiodic. Then the following hold:
\begin{itemize}
\item[{\rm (a)}] 
For fixed $n$, $\mu_{n}^{(k)} \weak \mu_{n}^{(\infty)}$ as $k\to\infty$ almost surely, where 
\begin{align*}
\mu_{n}^{(\infty)}(\cdot)= \dfrac{1}{n}\sum_{i \in [n]} \delta_{\Dst_{i,n}}(\cdot)
\end{align*}
with
\begin{align}
\label{Deltast}
\Dst_{i,n}= \sum_{j\in [n]}\pi_{n}(j)\,\dn_{j}-\dn_{i}
\end{align}
the stationary friendship bias on $G_n$. 
\item[{\rm (b)}]
Suppose that $((\dn_{i})^{2})_{i\in [n],\,n\in\N_{3}}$ and $((\dn_{i})^{2})_{i\in [n],\,n\in\N_{2}}$ are uniformly integrable for non-backtracking exploration, respectively, backtracking exploration. If $G_n$ converges locally in probability to $(G_{\infty},\phi)$ as $n\to\infty$, with a deterministic law $\bar{\Lambda}$, then $\mu_n^{(\infty)} \weak \mu$ in probability as $n\to\infty$, with $\mu$ the same distribution as in \eqref{eq:mudef}.
\end{itemize}
\end{theorem}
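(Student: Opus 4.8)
The plan is to prove the two parts separately: part (a) follows from the ergodicity established in Section~\ref{subsec:convexpl}, while part (b) is a Slutsky-type argument on top of local convergence.

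For part (a), I would fix $\omega$ in the almost-sure event on which $G_n(\omega)$ is connected and non-bipartite (backtracking case), respectively $\vec{P}_n(\omega)$ is irreducible and aperiodic (non-backtracking case). On that event \eqref{statlim} gives $P_n^{(k)}(i,j)(\omega)\to\pi_n(j)(\omega)$ as $k\to\infty$ for all $i,j\in[n]$, and since $[n]$ is finite the sum over $j$ causes no trouble, so
\[
\Delta_{i,n}^{(k)}(\omega)=\sum_{j\in[n]}P_n^{(k)}(i,j)(\omega)\,\dn_j(\omega)-\dn_i(\omega)\ \too\ \Dst_{i,n}(\omega),\qquad i\in[n].
\]
Since $\mu_n^{(k)}(\omega)$ and $\mu_n^{(\infty)}(\omega)$ are the uniform averages of the $n$ Dirac masses at $\Delta_{i,n}^{(k)}(\omega)$, respectively at $\Dst_{i,n}(\omega)$, convergence of each of these finitely many atoms yields $\mu_n^{(k)}(\omega)\weak\mu_n^{(\infty)}(\omega)$ (against bounded continuous test functions); as this holds $\P_n$-a.s., part (a) follows.

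For part (b), write $R_n=\frac{\sum_{j\in[n]}(\dn_j)^2}{\sum_{j\in[n]}\dn_j}$, so that $\Dst_{i,n}=R_n-\dn_i$ and hence $\mu_n^{(\infty)}$ is the pushforward of the empirical degree law $\nu_n:=\tfrac1n\sum_{i\in[n]}\delta_{\dn_i}$ under the affine map $x\mapsto R_n-x$. The first step is to read off from local convergence in probability, with the deterministic limit $\bar{\mu}$, that $\nu_n\weak\nu$ in probability, with $\nu$ the law of $d_\phi$ under $\bar{\mu}$ (apply local convergence to bounded functionals of the root degree), and that $\tfrac1n\sum_{j\in[n]}\dn_j\prob m^{(1)}$ and $\tfrac1n\sum_{j\in[n]}(\dn_j)^2\prob m^{(2)}$; for these last two the functionals are unbounded, so here I would invoke the uniform integrability of $((\dn_i)^2)_{i\in[n],\,n}$ (which also gives uniform integrability of $(\dn_i)_{i\in[n],\,n}$) to upgrade the convergence of truncations supplied by local convergence to convergence of the full averages, as in \cite[Chapter~2]{RvdH2}. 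This yields $R_n\prob r:=m^{(2)}/m^{(1)}$, where $m^{(1)}>0$ since degrees are bounded below by $1$.

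The second step combines $\nu_n\weak\nu$ in probability with $R_n\prob r$. For a bounded Lipschitz $g$ with constant $L$ one has $\int_\R g\,\dd\mu_n^{(\infty)}=\int_\R g(R_n-x)\,\nu_n(\dd x)$ and
\[
\Big|\int_\R g(R_n-x)\,\nu_n(\dd x)-\int_\R g(r-x)\,\nu_n(\dd x)\Big|\le L\,|R_n-r|\ \prob\ 0,
\]
while $\int_\R g(r-x)\,\nu_n(\dd x)\prob\int_\R g(r-x)\,\nu(\dd x)=\E_{\bar{\mu}}[g(r-d_\phi)]=\int_\R g\,\dd\mu$, using $\nu_n\weak\nu$ in probability and the fact that $x\mapsto g(r-x)$ is bounded continuous. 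Hence $\int g\,\dd\mu_n^{(\infty)}\prob\int g\,\dd\mu$ for every bounded Lipschitz $g$, and metrising weak convergence by the bounded-Lipschitz distance gives $\mu_n^{(\infty)}\weak\mu$ in probability. I expect the moment upgrade in the first step to be the main obstacle: local convergence in probability only controls averages of bounded neighbourhood functionals, so passing to the unbounded first and second degree moments — i.e.\ ruling out escape of mass in the ratio $R_n$ — rests entirely on the uniform-integrability hypothesis; everything else is routine bookkeeping.
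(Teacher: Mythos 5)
Your proposal is correct and follows essentially the same route as the paper: part (a) is the paper's ergodicity argument atom by atom, and part (b) uses the same decomposition $\Dst_{i,n}=m_n^{(2)}/m_n^{(1)}-\dn_i$ together with the same key input (uniform integrability plus local convergence gives $m_n^{(2)}/m_n^{(1)}\prob m^{(2)}/m^{(1)}$, the content of the paper's Lemma \ref{lem}). The only difference is cosmetic but welcome: by testing against bounded Lipschitz functions you reduce the final Slutsky step to the one-line bound $L\,|R_n-r|$, whereas the paper carries out a longer sandwich argument with the envelope functions $g_{1,\epsilon},g_{2,\epsilon}$ built from a bounded uniformly continuous $f$.
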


\begin{remark}
\rm{The irreducibility and aperiodicity of the non-backtracking transition matrix $\vec{P}_n$ follow from the structural properties of the underlying graph $G_n$ under some additional assumptions on the degrees. If a graph is finite, connected, has minimum degree at least $2$ and has at least one vertex of degree exceeding $2$, then the directed edge graph associated with it is strongly connected, so that the corresponding non-backtracking transition matrix is irreducible (see \cite[Claim~4]{EH}). The non-backtracking Markov chain on a finite graph with minimum degree at least $2$ is aperiodic whenever the underlying graph is non-bipartite and is not a single cycle (i.e., it has maximum degree exceeding $2$), since the presence of an odd cycle allows return paths of both even and odd lengths. Hence, for finite connected non-bipartite graphs with a minimum degree at least $2$ and maximum degree exceeding $2$, the non-backtracking Markov chain on directed edges is both irreducible and aperiodic.}
\end{remark}


\subsection{Lazy exploration}
\label{subsec:lazy}
Non-bipartiteness is a mild condition, but we can even drop it when we consider `lazy exploration'. More precisely, fix $\omega\in\Omega_{n}$ and consider a random walk on $G_{n}(\omega)$ with a laziness factor $\delta \in (0,1)$. The walk starts from a vertex chosen uniformly at random and, at each step, with probability $\delta$ remains at its current position and with probability $1-\delta$ moves to a neighbouring vertex chosen uniformly at random. The distribution $\pi_{n}(\omega)$ defined in \eqref{statform} again serves as the stationary distribution for the lazy random walk as well. Since the lazy random walk is aperiodic, if the graph $G_{n}(\omega)$ is connected, then the lazy random walk is ergodic. In particular, if $P_{n}^{(k,\mathrm{lazy})}(\omega)$ is the $k$-step transition matrix of the lazy random walk, then
\begin{align*} 
\lim_{k \rightarrow \infty} P_{n}^{(k,\mathrm{lazy})}(i,j)(\omega) 
= \lim_{k \rightarrow \infty} \Big(\delta I_{n} + (1 - \delta) P_{n}(\omega)\Big)^{k}(i,j) = \pi_{n}(j)(\omega) \qquad \forall\,i, j \in [n],
\end{align*}
where $I_{n}$ is the $n \times n$ identity matrix and $P_{n}(\omega)$ is the one-step transition matrix of the backtracking random walk on $G_{n}(\omega)$ \cite[Theorem 1.8.3]{N}. We add the label `lazy' to indicate the lazy exploration. 

The following corollary immediately follows from the proof of Theorem \ref{thm3}.

\begin{corollary} 
\label{cor}
Consider lazy exploration, and assume that $G_n$ is almost surely connected. Then, for any fixed $n$, $\mu_{n}^{(k,\mathrm{lazy})} \weak \mu_{n}^{(\infty)}$ as $k \to \infty$ almost surely, where $\mu_{n}^{(\infty)}$ is as defined in Theorem \ref{thm3}. Moreover, if $((\dn_{i})^{2})_{i \in [n], n \in \N_2}$ is uniformly integrable and $G_n$ converges locally in probability as $n\to\infty$ to $(G_{\infty}, \phi)$ with a deterministic law $\bar{\Lambda}$, then $\mu_n^{(\infty)} \weak \mu$ as $n \to \infty$ in probability, where $\mu$ is the distribution defined in \eqref{eq:mudef}. 
\end{corollary}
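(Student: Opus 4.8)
\textbf{Proof proposal for Corollary \ref{cor}.}

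The plan is to run the argument of Theorem \ref{thm3} verbatim, replacing the backtracking kernel $P_n$ by the lazy kernel $\delta I_n + (1-\delta)P_n$ and checking that none of the steps used non-bipartiteness or the specific value $\delta = 0$. First I would establish part (a). Fix $\omega \in \Omega_n$ with $G_n(\omega)$ connected. The lazy walk is then irreducible (since $G_n(\omega)$ is connected) and aperiodic (since the holding probability $\delta$ is strictly positive), and its stationary distribution is still $\pi_n(\cdot)(\omega)$ from \eqref{statform}, because $\pi_n$ is reversible for $P_n$ and holding probabilities preserve reversibility. Hence $P_n^{(k,\mathrm{lazy})}(i,j)(\omega) \to \pi_n(j)(\omega)$ for all $i,j$ as $k\to\infty$, exactly as displayed in Section \ref{subsec:lazy}. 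Consequently, for each $i \in [n]$,
\begin{align*}
\Delta_{i,n}^{(k,\mathrm{lazy})}(\omega) = \sum_{j \in [n]} P_n^{(k,\mathrm{lazy})}(i,j)(\omega)\,\dn_j(\omega) - \dn_i(\omega) \too \sum_{j \in [n]} \pi_n(j)(\omega)\,\dn_j(\omega) - \dn_i(\omega) = \Dst_{i,n}(\omega),
\end{align*}
the finite sum over $j \in [n]$ making the interchange of limit and sum trivial. Since this convergence holds for every $i$ and $n$ is fixed and finite, $\mu_n^{(k,\mathrm{lazy})}(\omega) = \tfrac1n\sum_i \delta_{\Delta_{i,n}^{(k,\mathrm{lazy})}(\omega)}$ converges weakly to $\tfrac1n\sum_i \delta_{\Dst_{i,n}(\omega)} = \mu_n^{(\infty)}(\omega)$; this holds for $\P_n$-almost every $\omega$ by the almost sure connectedness hypothesis, giving (a).

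For part (b), note that the limiting object $\mu_n^{(\infty)}$ is \emph{identical} to the one in Theorem \ref{thm3}(a): it does not depend on $\delta$, only on the degree sequence of $G_n$ through $\Dst_{i,n}$. Therefore the second assertion of Theorem \ref{thm3} applies with no change at all: assuming $((\dn_i)^2)_{i\in[n],\,n\in\N_2}$ uniformly integrable and $G_n \to (G_\infty,\phi)$ locally in probability with deterministic $\bar\mu$, the conclusion $\mu_n^{(\infty)} \weak \mu$ in probability as $n\to\infty$ is precisely the content of Theorem \ref{thm3}(b) (in its backtracking form, which only required the uniform-integrability assumption on the squared degrees). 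This is why the corollary "immediately follows from the proof of Theorem \ref{thm3}": the laziness only enters in part (a) to restore aperiodicity, and it leaves the $k=\infty$ profile untouched, so part (b) is inherited wholesale.

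The only point requiring a sentence of care — and the mild obstacle, though it is genuinely minor — is confirming that $\pi_n$ remains stationary and that ergodicity holds for the lazy chain without the non-bipartiteness assumption that was needed for the plain backtracking walk; this is exactly the computation $\delta I_n + (1-\delta)P_n$ being a convex combination of stochastic matrices sharing the left eigenvector $\pi_n$, together with positivity of the diagonal forcing aperiodicity, both already spelled out in Section \ref{subsec:lazy}. Everything else is a transcription of the Theorem \ref{thm3} argument with $P_n^{(k)}$ replaced by $P_n^{(k,\mathrm{lazy})}$, and I would simply cite the relevant lines of that proof rather than reproduce them.
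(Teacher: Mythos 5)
Your proposal is correct and matches the paper's intended argument: the paper proves the corollary exactly by noting (in Section \ref{subsec:lazy}) that the lazy kernel keeps $\pi_n$ as stationary distribution and is ergodic under connectedness alone, then re-running the proof of Theorem \ref{thm3}(a) with $P_n^{(k,\mathrm{lazy})}$ in place of $P_n^{(k)}$, while part (b) is inherited verbatim since $\mu_n^{(\infty)}$ is unchanged. No gaps.
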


Unlike the non-bipartiteness condition, the connectedness condition is a significant restriction, as many interesting random graphs are not connected almost surely. In these cases, we can look at each connected component separately. However, assuming non-bipartiteness for all the connected components is a rather severe restriction. A practical solution is to focus on the lazy exploration on each connected component. A lazy random walk on each connected component $C_{i}=C_{i}(G_{n})$ containing a vertex $i\in V(G_{n})$ is ergodic with a (unique) stationary distribution given by 
\begin{align*}
\pi_{n}^{(\mathrm{c})}(i) = \pi_{n}^{(\mathrm{c})}(\{i\})=\dfrac{\dn_{i}}{\sum_{j\in V(C_{i})}\dn_{j}}, \qquad i\in [n].
\end{align*}
Specifically, for all $i\in [n]$ and $j\in V(C_{i})$,
\begin{align}
\label{statlim2}
\lim_{k \rightarrow \infty} P_{n}^{(k,\mathrm{lazy})}(i,j)=\pi_{n}^{(\mathrm{c})}(j),
\end{align}
which holds everywhere \cite[Theorem 1.8.3]{N}. 

\begin{theorem}
\label{thm6}
For lazy exploration the following hold:
\begin{itemize}
\item[{\rm (a)}] 
For fixed $n\in\N_{2}$, $\mu_{n}^{(k,\mathrm{lazy})} \weak \mu_{n}^{(\infty ,\mathrm{c})}$ as $k\to\infty$ almost surely, where 
\begin{align*}
\mu_{n}^{(\infty ,\mathrm{c})}(\cdot)= \dfrac{1}{n}\sum_{i \in [n]} \delta_{\Delta^{(\mathrm{st},\mathrm{c})}_{i,n}}(\cdot)
\end{align*}
with
\begin{align*}
\Delta^{(\mathrm{st},\mathrm{c})}_{i,n}= \sum_{j\in [n]}\pi_{n}^{(\mathrm{c})}(j)\,\dn_{j}-\dn_{i}.
\end{align*}
\item[{\rm (b)}]
Suppose that $((\dn_{i})^{2})_{i\in [n],\,n\in\N_{2}}$ is uniformly integrable, and $\P_{n}\{G_{n}\text{ is connected}\} \to 1$ as $n\to\infty$. If $G_n$ converges locally in probability as $n\to\infty$ to $(G_{\infty},\phi)$, with a deterministic law $\bar{\Lambda}$, then $\mu_n^{(\infty ,\mathrm{c})} \weak \mu$ in probability as $n\to\infty$, with $\mu$ the same distribution as in \eqref{eq:mudef}.
\end{itemize}
\end{theorem}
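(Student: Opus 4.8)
\textbf{Proof proposal for Theorem \ref{thm6}.}

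The plan is to mimic the proof of Theorem \ref{thm3}, replacing the stationary distribution $\pi_n$ of the backtracking walk on a connected graph by the component-wise stationary distribution $\pi_n^{(\mathrm{c})}$, and then to transfer the component-wise equilibrium statement through local convergence. For part (a), fix $\omega\in\Omega_n$. By \eqref{statlim2}, for every $i\in[n]$ and every $j\in V(C_i(\omega))$ we have $P_n^{(k,\mathrm{lazy})}(i,j)(\omega)\to\pi_n^{(\mathrm c)}(j)(\omega)$ as $k\to\infty$, while $P_n^{(k,\mathrm{lazy})}(i,j)(\omega)=0$ identically for $j\notin V(C_i(\omega))$. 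Since each component is finite (being contained in a finite graph), the sum defining $\Delta_{i,n}^{(k,\mathrm{lazy})}(\omega)=\sum_{j\in[n]}P_n^{(k,\mathrm{lazy})}(i,j)(\omega)\,\dn_j(\omega)-\dn_i(\omega)$ is a finite sum, so we may pass to the limit term by term to get $\Delta_{i,n}^{(k,\mathrm{lazy})}(\omega)\to\Delta_{i,n}^{(\mathrm{st},\mathrm c)}(\omega)$ for each $i$. Averaging over the $n$ vertices gives that $\mu_n^{(k,\mathrm{lazy})}$, a convex combination of $n$ Dirac masses whose atoms each converge, converges weakly to $\mu_n^{(\infty,\mathrm c)}$ as $k\to\infty$; since $\omega$ was arbitrary with the full graph $G_n$ always finite, this holds almost surely (indeed for every $\omega$).

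For part (b) I would argue in two steps: first replace $\pi_n^{(\mathrm c)}$ by the simpler global object $\pi_n$ on the event that $G_n$ is connected, and then apply the conclusion of Theorem \ref{thm3}(b). Concretely, on the event $\{G_n\text{ is connected}\}$ one has $V(C_i)=[n]$ for all $i$, hence $\pi_n^{(\mathrm c)}=\pi_n$ and therefore $\mu_n^{(\infty,\mathrm c)}=\mu_n^{(\infty)}$ with $\mu_n^{(\infty)}$ as in Theorem \ref{thm3}. Since $\P_n\{G_n\text{ is connected}\}\to1$, for any bounded continuous $f$ and any $\varepsilon>0$,
\begin{align*}
\P_n\Big\{\Big|\textstyle\int f\,\dd\mu_n^{(\infty,\mathrm c)}-\int f\,\dd\mu_n^{(\infty)}\Big|>\varepsilon\Big\}
\leq \P_n\{G_n\text{ is not connected}\}\to0,
\end{align*}
so $\mu_n^{(\infty,\mathrm c)}$ and $\mu_n^{(\infty)}$ have the same limit in probability, if either exists. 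It therefore suffices to invoke Theorem \ref{thm3}(b): the hypotheses there are local convergence in probability of $G_n$ to $(G_\infty,\phi)$ with deterministic law $\bar\mu$ and uniform integrability of $((\dn_i)^2)_{i\in[n],\,n\in\N_2}$, both of which are assumed here (note the laziness does not affect $\mu_n^{(\infty)}$, since by the computation in Section \ref{subsec:lazy} the lazy walk has the same equilibrium $\pi_n$ as the non-lazy backtracking walk). This yields $\mu_n^{(\infty)}\weak\mu$ in probability as $n\to\infty$, and combining with the above gives $\mu_n^{(\infty,\mathrm c)}\weak\mu$ in probability.

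The only delicate point is making sure that Theorem \ref{thm3}(b) is genuinely applicable: its statement is phrased for $G_n$ \emph{almost surely} connected and non-bipartite, whereas here we only have $\P_n\{G_n\text{ connected}\}\to1$. The clean way around this is to note that the proof of Theorem \ref{thm3}(b) only uses connectedness to identify $\mu_n^{(\infty)}$ and otherwise rests on local convergence plus uniform integrability of the squared degrees to show $\int x\,\mu_n^{(\infty)}(\dd x)$ and $\int x^2\,\mu_n^{(\infty)}(\dd x)$ converge to the corresponding moments of $\mu$ — or, more robustly, one reruns that argument directly for $\mu_n^{(\infty,\mathrm c)}$, observing that $\Delta_{i,n}^{(\mathrm{st},\mathrm c)}=\frac{\sum_{j\in V(C_i)}(\dn_j)^2}{\sum_{j\in V(C_i)}\dn_j}-\dn_i$ is exactly the quantity that, viewed from a uniformly random root, converges in distribution under local convergence to $\frac{m^{(2)}}{m^{(1)}}-d_\phi=\Delta_\phi$, since a finite connected component seen from a random vertex is, in the local limit of a graph whose limit $(G_\infty,\phi)$ is as prescribed, replaced by the appropriate size-biased neighbourhood statistics. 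This is why the corollary is claimed to follow ``from the proof of Theorem \ref{thm3}'' rather than from its statement, and I would make that dependence explicit rather than treat Theorem \ref{thm3}(b) as a black box.
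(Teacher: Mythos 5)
Your proposal is correct and follows essentially the same route as the paper: part (a) is the term-by-term passage to the limit via \eqref{statlim2} followed by the argument of Theorem \ref{thm3}(a), and part (b) reduces, exactly as the paper does, to the observation that on the event $\{G_{n}\text{ is connected}\}$ the component-wise quantity coincides with the global one, whose convergence requires only local convergence plus uniform integrability of the squared degrees (the paper phrases this as $\sum_{j\in [n]}(\dn_{j})^2/\sum_{l\in V(C_{j})}\dn_{l}\prob m^{(2)}/m^{(1)}$ via \eqref{kn2} and the bound $\P_n\{G_n\text{ is disconnected}\}\to 0$, rather than at the level of $\int f\,\dd\mu_n^{(\infty,\mathrm{c})}$, but the estimate is the same). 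Your closing observation that the connectedness hypothesis of Theorem \ref{thm3}(b) enters only through the identification of $\mu_n^{(\infty)}$ is precisely how the paper's proof proceeds.
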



\section{Role of the order of the limits of graph size and exploration depth}
\label{sec:orderoflimits}

Theorems \ref{thm1}, \ref{thm2} and \ref{thm3} suggest that the choice of exploration does not matter for the multi-level friendship paradox in the limit as $k,n\to\infty$. Moreover, Theorems \ref{thm2} and \ref{thm3} show that the limit of $\mu_{n}^{(k)}$ as $k,n \to \infty$ does not depend on the order in which the limits are taken, at least for the non-backtracking exploration, provided the conditions are met. 

In \cite{HHP}, we looked at four classes of random graphs: the homogeneous Erd\H{o}s-R\'enyi random graph (HER), the inhomogeneous Erd\H{o}s-R\'enyi random graph (IER), the Configuration Model (CM), the Preferential Attachment Model (PAM). For all four classes the local limit is \emph{known} and is given by an interesting \emph{random rooted tree} (see \cite{RvdH1,RvdH2}). All four classes satisfy the conditions in Theorem \ref{thm6} for $n$ large enough, provided we restrict to the giant component for (HER) and (IER) to ensure connectedness. Only the first three classes satisfy the conditions in Theorem \ref{thm2} for $n$ large enough. Indeed, for (PAM) the local limit is a P\'olya point tree, for which conditions (I) and (II) fail. For (HER) the limit is a \emph{Galton-Watson tree} with offspring distribution {\rm Poisson}($\lambda$), for (IER) the limit is a \emph{unimodular multi-type marked Galton-Watson tree}, while for (CM) the limit is a \emph{unimodular branching process tree} (see \cite{RvdH2} for definitions and proofs), all of which satisfy conditions (I) and (II). 


\subsection{Two cases: after and before mixing}

\begin{definition}
{\rm We call a diverging sequence of times $(\psi_{n})_{n\in\N}$ the \textit{mixing time scale} when for any sequence $(k_{n})_{n\in\N}$ in $\N_{0}$ the following conditions hold:}
\begin{itemize}
\item[{\rm (1)}] {\rm If $\lim_{n\to\infty} k_{n}/\psi_{n}=0$, then $\lim_{n\to\infty} \mathcal{D}_{n}(k_{n}) = 1$ in probability.}
\item[{\rm (2)}] {\rm If $\lim_{n\to\infty} k_{n}/\psi_{n}=\infty$, then $\lim_{n\to\infty} \mathcal{D}_{n}(k_{n}) = 0$ in probability.}
\end{itemize}
{\rm Here, $\mathcal{D}_{n}(k)$ is the worst case total variation distance to the stationary distribution on $G_n$ of the exploration on $G_n$ at time $k$.}
\hfill$\spadesuit$
\end{definition}

\begin{theorem}
\label{thmA1}
For both types of exploration, let $(\psi_{n})_{n\in\N}$ be the mixing time scale of the random walk $(X_{n})_{n\in\N}$. Suppose that $((\dn_{i})^{2})_{i\in [n],\,n\in\N_{3}}$ and $((\dn_{i})^{2})_{i\in [n],\,n\in\N_{2}}$ are uniformly bounded for non-backtracking exploration, respectively, backtracking exploration. If $G_n$ converges locally in probability to $(G_{\infty},\phi)$ as $n\to\infty$, with a deterministic law $\bar{\Lambda}$, then $\mu_{n}^{(k_n)} \weak \mu$ as $n\to\infty$ in probability for any $k_n$ such that $\lim_{n\to\infty} k_{n}/\psi_{n}=\infty$, and $\mu$ is the same distribution as in \eqref{eq:mudef}.
\end{theorem}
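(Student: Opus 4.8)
The plan is to interpolate between the dynamic quantity $\mu_n^{(k_n)}$ and the stationary quantity $\mu_n^{(\infty)}$ of Theorem \ref{thm3}, and then invoke Theorem \ref{thm3}(b) (or Corollary \ref{cor}/Theorem \ref{thm6} as appropriate) to land on $\mu$. More precisely, it suffices to prove that, for every bounded Lipschitz test function $f\colon\R\to\R$,
\begin{align*}
\int_\R f\,\dd\mu_n^{(k_n)} - \int_\R f\,\dd\mu_n^{(\infty)} \prob 0 \qquad (n\to\infty),
\end{align*}
since by Theorem \ref{thm3}(b) the second integral converges in probability to $\int_\R f\,\dd\mu$, and these two facts combine to give $\mu_n^{(k_n)}\weak\mu$ in probability. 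Writing things out, the difference of the two integrals is
\begin{align*}
\frac1n\sum_{i\in[n]}\Big(f(\Delta_{i,n}^{(k_n)}) - f(\Dst_{i,n})\Big),
\end{align*}
and by the Lipschitz bound its absolute value is at most $\|f\|_{\mathrm{Lip}}\,\tfrac1n\sum_{i}|\Delta_{i,n}^{(k_n)}-\Dst_{i,n}|$, so everything reduces to controlling the averaged discrepancy between the $k_n$-level bias and the stationary bias.

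The key estimate is therefore
\begin{align*}
\big|\Delta_{i,n}^{(k_n)} - \Dst_{i,n}\big|
= \Big|\sum_{j\in[n]} \big(P_n^{(k_n)}(i,j) - \pi_n(j)\big)\,\dn_j\Big|,
\end{align*}
which I would bound in two regimes of the degree variable. Split the sum at a truncation level $M$: the contribution of $j$ with $\dn_j\le M$ is at most $2M\,\mathcal D_n(k_n)$ by definition of the total variation distance to stationarity (recall $\mathcal D_n$ is the worst-case TV distance and $\sum_j|P_n^{(k_n)}(i,j)-\pi_n(j)|\le 2\mathcal D_n(k_n)$); the contribution of $j$ with $\dn_j>M$ is bounded, after averaging over $i$, using that $\tfrac1n\sum_i P_n^{(k_n)}(i,j)$ is of order $\pi_n(j)$ up to the mixing error, plus the explicit form $\pi_n(j)=\dn_j/\sum_\ell \dn_\ell$, so that the tail sum is controlled by $\tfrac1n\sum_{j:\dn_j>M}\dn_j\cdot\tfrac{\dn_j}{\frac1n\sum_\ell \dn_\ell}$, i.e.\ by the truncated second moment of the empirical degree distribution. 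For non-backtracking exploration one passes through the directed-edge chain $\vec P_n$ exactly as in Section \ref{subsec:convexpl}, using that the uniform distribution on $\vec E(G_n)$ is stationary and that $\vec{\mathcal D}_n$ governs convergence there; the relation $\dn_{i_1}P_n^{(k)}(i_1,j_1)=\sum_{i_2,j_2}\vec P_n^{k}(\overset{\rightarrow}{\{i_1,i_2\}},\overset{\rightarrow}{\{j_1,j_2\}})$ from Section \ref{subsec:convexpl} transfers the TV bound back to the vertex walk.

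Putting the pieces together: by hypothesis $\lim_{n\to\infty}k_n/\psi_n=\infty$, so $\mathcal D_n(k_n)\prob 0$ by the definition of the mixing time scale; hence for fixed $M$ the small-degree part vanishes in probability. The large-degree part is handled by uniform integrability of $((\dn_i)^2)_{i\in[n],n}$: it makes the truncated second moment of the empirical degree distribution small uniformly in $n$ once $M$ is large (note the empirical first moment $\tfrac1n\sum_\ell\dn_\ell$ is bounded below by $2$, indeed by the minimum-degree assumption, and is bounded above by uniform integrability, so the ratio causes no trouble). Therefore, given $\varepsilon>0$, first choose $M$ large to make the tail part below $\varepsilon/2$ uniformly in $n$, then let $n\to\infty$ so that the small-degree part is below $\varepsilon/2$ with probability tending to $1$; this yields $\tfrac1n\sum_i|\Delta_{i,n}^{(k_n)}-\Dst_{i,n}|\prob 0$ and closes the argument. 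I expect the main obstacle to be the large-degree tail term for the \emph{quenched} (per-realisation) chain: unlike in the stationary case, $P_n^{(k_n)}(i,j)$ for an individual pair need not be comparable to $\pi_n(j)$, so one must average over the starting vertex $i$ first (exploiting $\tfrac1n\sum_i P_n^{(k_n)}(i,j)\le \tfrac1n + \mathcal D_n(k_n)$-type bounds, or a Cauchy--Schwarz/Markov argument on $i$) before the degree truncation can be applied, and making this averaging rigorous while keeping the error uniform in $n$ is the delicate point; the small-degree part, by contrast, is essentially immediate from the mixing-time-scale definition.
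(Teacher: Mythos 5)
Your overall strategy is the same as the paper's: interpolate through the stationary empirical measure $\mu_n^{(\infty)}$, use the definition of the mixing time scale to get $\mathcal{D}_n(k_n)\prob 0$, and hand the second leg to Theorem \ref{thm3}(b). Where you genuinely differ is in how the degree weights enter the total-variation estimate. The paper's proof sets $M=\sup_{n}\sup_{i\in[n]}\dn_{i}$ and bounds $|\Delta_{U_n,n}^{(k_n)}-\Dst_{U_n,n}|\leq 2M\mathcal{D}_n(k_n)$ in one line; this is clean but tacitly requires uniformly bounded degrees, which is not among the stated hypotheses (only uniform integrability of the squared degrees is assumed). Your truncation at level $M$ combined with uniform integrability is precisely what is needed to remove that tacit assumption, so your route is, if anything, more faithful to the theorem as stated. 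The step you correctly flag as delicate does need one concrete ingredient to close: a TV bound alone cannot control $\sum_{j:\dn_j>M}P_n^{(k_n)}(i,j)\,\dn_j$, since the walk could in principle park its excess mass on the highest-degree vertices. The fix is reversibility of the backtracking chain, $\dn_i\,P_n^{(k)}(i,j)=\dn_j\,P_n^{(k)}(j,i)$, which yields $\sum_{i\in[n]}\dn_i\,P_n^{(k)}(i,j)=\dn_j$ and hence $\tfrac1n\sum_{i}P_n^{(k)}(i,j)\leq \dn_j/(n\,d_{\min})$; for the non-backtracking chain the same identity follows by summing the edge-to-vertex relation of Section \ref{subsec:convexpl} over the starting vertex and using the double stochasticity of $\vec{P}_n$. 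With that in hand your tail term is at most $d_{\min}^{-1}\cdot\tfrac1n\sum_{j:\dn_j>M}(\dn_j)^2$ (and similarly for the $\pi_n$-part, since $\sum_{\ell}\dn_{\ell}\geq n\,d_{\min}$), which uniform integrability makes small uniformly in $n$, and your $\varepsilon/2$--$\varepsilon/2$ scheme then closes the argument.
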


Let $\CM_{n}(\mathbf{d}_{n})$ denote the configuration model on $[n]$ with a degree sequence $\mathbf{d}_{n}$. Suppose that $d_{\max}$ denotes the maximum vertex degree in the degree sequence $\mathbf{d}_{n}$. 
Let $D_{n}$ be the degree of a uniformly chosen vertex in $[n]$. If $D_{n}$ converges in distribution to a random variable $D$ with $\P\{D\geq 1\}=1$ such that $\E[D_{n}] \to \E[D]<\infty$, then this random graph converges locally in probability to a unimodular branching process tree $(G_{\infty},\phi)$ with root offspring distribution $p=(p_{k})_{k\in \N_0}$ given by $p_{k} = \P\{D=k\}$ and with offspring distribution $p^{\star}=(p^{\star}_{k})_{k\in \N_0}$ for all other vertices, where $p^{\star}$ is the size-biased version of $p$ given by
\begin{align}
\label{CM-neq--2}
p^{\star}_{k} =\P\{D^{\star}-1=k\} = \frac{(k+1)p_{k+1}}{\E[D]}
\end{align}
\cite[Theorem 4.1 and Definition 1.26]{RvdH2}. Take $m_{n}\in\N$ such that
\begin{align}
\label{CM-neq--1}
m_{n}=o\left(\sqrt{\frac{n}{d_{\max}}}\,\right)\quad \text{with}\quad d_{\max}=o(n),\qquad n\to \infty.
\end{align}

\begin{theorem}
\label{thmA2}
Consider the non-backtracking exploration. Suppose that $G_{n}=\CM_{n}(\mathbf{d}_{n})$ is such that $\inf_{n\in\N}\inf_{i\in[n]} d_{i}^{(G_{n})} \geq 3$ and $\sup_{n\in\N}\sup_{i\in[n]}d_{i}^{(G_{n})} = M < \infty$. If $D_n\weak D$ with $\P\{D\geq 1\}=1$, then $\mu_{n}^{(k_n)} \weak \mu$ as $n\to\infty$ in distribution for any $k_{n}\leq \frac{\log m_{n}}{2\log M}$ with $k_{n}\to\infty$ as $n\to\infty$, where $\mu$ is the law of
\[
\Delta_{\phi}= \frac{\E[D^2]}{\E[D]} -d_{\phi},
\]
and $m_n$ is defined as in \eqref{CM-neq--1}. 
\end{theorem}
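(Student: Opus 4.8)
The plan is to couple the $k_n$-step non-backtracking exploration on $\CM_n(\mathbf{d}_n)$ with a truncated branching process, exploiting that $k_n$ is so small relative to $\sqrt{n/d_{\max}}$ that the explored neighbourhood up to depth $k_n$ is, with high probability, a tree distributed exactly as the first $k_n$ generations of the unimodular branching process tree $(G_\infty,\phi)$. Concretely: since $k_n\le \frac{\log m_n}{2\log M}$ and all degrees are bounded by $M$, the number of vertices within distance $k_n$ of the starting vertex $X_{n,0}$ is at most $M^{k_n}\le \sqrt{m_n}= o(n^{1/4}/d_{\max}^{1/4})$, well below the $\Theta(\sqrt n)$ threshold at which the configuration-model pairing first creates a cycle in a bounded-degree exploration. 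So first I would invoke the standard exploration/pairing argument for the configuration model (as in \cite{RvdH2}, the proof of local convergence, or the surplus-edge estimates used there) to show that, conditionally on $X_{n,0}=i$, the ball $B_{k_n}(X_{n,0})$ in $G_n$ is tree-like with probability $1-o(1)$ uniformly in $i$, and on that event the law of the rooted tree $B_{k_n}(X_{n,0})$, together with the sequence of non-backtracking steps taken by $X_n$, agrees with the law of the first $k_n$ generations of $(G_\infty,\phi)$ together with the non-backtracking walk $X$ on it. The key point is that a non-backtracking walk on a tree moves strictly downward, so only the ball $B_{k_n}$ is ever relevant, and $\Delta_{X_{n,0},n}^{(k_n)}$ is a function of this ball alone (it only reads off degrees along depth-$k_n$ non-backtracking paths from the root).

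Given this coupling, $\Delta_{X_{n,0},n}^{(k_n)}$ has the same law (up to $o(1)$ total variation) as $\Delta_\phi^{(k_n)}$ computed on the unimodular branching process tree. By construction of $\CM_n$, the root offspring distribution of $(G_\infty,\phi)$ is $p_k=\P\{D=k\}$ and every non-root vertex has the size-biased offspring law $p^\star$ in \eqref{CM-neq--2}; in particular, conditions (I) and (II) of Theorem~\ref{thm2}(a) hold — the degrees $(d_i)_{i\in\N}$ of the non-root vertices are i.i.d.\ with the size-biased law, which has finite second moment because $D$ is bounded (indeed $D\le M$), and $d_\phi$ is independent of them. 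The minimum-degree-at-least-$3$ hypothesis is exactly what Theorem~\ref{thm2}(a) needs for the non-backtracking walk to be well defined (so $d_{i_l}-1\ge 2>0$ at every step). Hence Theorem~\ref{thm2}(a) applies: $\Delta_\phi^{(k)}\prob \Delta_\phi=\frac{m^{(2)}}{m^{(1)}}-d_\phi$ as $k\to\infty$, where $m^{(1)}=\E_{\bar\mu}[d_\phi]=\E[D]$ and $m^{(2)}=\E_{\bar\mu}[d_\phi^2]=\E[D^2]$, so $\Delta_\phi=\frac{\E[D^2]}{\E[D]}-d_\phi$, matching the claimed formula. Since $k_n\to\infty$, $\Delta_\phi^{(k_n)}\weak\Delta_\phi$, and combining with the coupling gives that $\Delta_{X_{n,0},n}^{(k_n)}$ converges in distribution to $\Delta_\phi$, i.e.\ $\tilde\mu_n^{(k_n)}\weak\mu$; stated for the quenched empirical measure $\mu_n^{(k_n)}$ this is weak convergence in distribution, which is the assertion.

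I would carry this out in the following order: (i) the deterministic combinatorial bound $|B_{k_n}(X_{n,0})|\le M^{k_n}\le\sqrt{m_n}$; (ii) the configuration-model pairing estimate showing $B_{k_n}$ is a tree with probability $1-o(1)$ (this is where the choice \eqref{CM-neq--1} of $m_n$, with $m_n=o(\sqrt{n/d_{\max}})$, is used — the probability of creating a self-loop or double edge before exhausting $M^{k_n}$ half-edges is $O(M^{2k_n}/n)=O(m_n/n)=o(1)$, or more sharply $O(M^{2k_n}d_{\max}/n)=o(1)$); (iii) identification of the law of the tree-plus-walk with the truncated unimodular branching process tree; (iv) verification of conditions (I)--(II) and the minimum-degree condition for Theorem~\ref{thm2}(a); (v) passage to the limit via Theorem~\ref{thm2}(a) and $k_n\to\infty$. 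The main obstacle is step~(ii): one must make the pairing/exploration argument uniform in the starting vertex $i$ and quantitative enough that the failure probability is $o(1)$ given the precise relation between $k_n$, $M$ and $m_n$; the subtlety is that the walk explores a depth-$k_n$ neighbourhood whose size fluctuates, so I would dominate it deterministically by $M^{k_n}$ and then control the number of "back-edges" created during at most $M^{k_n}$ half-edge pairings by a union bound, which is exactly where the constraint $k_n\le\frac{\log m_n}{2\log M}$ (equivalently $M^{2k_n}\le m_n$) together with $m_n=o(\sqrt{n/d_{\max}})$ delivers the required $o(1)$.
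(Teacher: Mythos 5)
Your overall architecture (bound the explored ball by $M^{k_n}\le\sqrt{m_n}$, couple the configuration-model exploration with a branching process via the pairing estimate, then pass to the limit of the friendship bias on the tree) is the same as the paper's, which invokes \cite[Lemma 4.2]{RvdH2} for exactly the coupling you describe in steps (i)--(ii). But there is a genuine gap in your step (iii) and in how you then use Theorem~\ref{thm2}(a). The coupling available for $\CM_n(\mathbf{d}_n)$ identifies the depth-$k_n$ ball with the first $k_n$ generations of the \emph{$n$-dependent} branching process $\mathrm{BP}_n$, whose offspring laws are $D_n$ at the root and $D_n^\star-1$ elsewhere --- not with the limiting tree $(G_\infty,\phi)$, whose offspring laws are $D$ and $D^\star-1$. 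Your claim that the law of the explored ball ``agrees with the law of the first $k_n$ generations of $(G_\infty,\phi)$'' is false in general: the hypothesis is only $D_n\weak D$, with no rate, and the total-variation distance between the two truncated trees can be of order $M^{k_n}\,\mathrm{d}_{TV}(D_n^\star,D^\star)$, which need not vanish when $k_n\to\infty$. Consequently you cannot reduce the problem to the statement $\Delta_\phi^{(k_n)}\prob\Delta_\phi$ of Theorem~\ref{thm2}(a), which lives entirely on $(G_\infty,\phi)$.

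What is missing is a concentration argument for $\chi_{k_n}=\sum_j P^{(k_n)}(\varnothing,j)\,d_j^{(\mathrm{BP}_n)}$ carried out directly on $\mathrm{BP}_n$. The paper does this by computing $\E[\chi_{k_n}]=\E[D_n^\star]\to\E[D^\star]$ and bounding the second moment by $M^2\,2^{-k_n}+(\E[D_n^\star])^2$, using the independence of $P^{(k_n)}(\varnothing,j)$ from $d_j$ and the bound $\sup_j P^{(k_n)}(\varnothing,j)\le 2^{-k_n}$ (this is where the minimum-degree-$3$ hypothesis is actually used, rather than merely to make the walk well defined, as you suggest); Chebyshev then gives $\chi_{k_n}\prob\E[D^2]/\E[D]$, and $d_\varnothing^{(\mathrm{BP}_n)}\weak d_\phi$ finishes the identification. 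This computation is the analogue, at level $n$, of the proof of Theorem~\ref{thm2}(a), and it cannot be replaced by citing that theorem. A secondary, more minor point: your final sentence passes from convergence in distribution of $\Delta_{X_{n,0},n}^{(k_n)}$ (an annealed statement, i.e.\ $\tilde\mu_n^{(k_n)}\weak\mu$) to the claim about the quenched empirical measure $\mu_n^{(k_n)}$; since the limit $\int f\,\dd\mu$ is deterministic this can be repaired (e.g.\ via an $L^1$ bound on $\E_{\bar\mu_n}[f(\Delta_{U_n,n}^{(k_n)})\mid G_n]-\E[f(\Delta_{\varnothing,n,\mathrm{BP}_n}^{(k_n)})]$, as the paper does), but as written the step is not justified.
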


\noindent
By picking $m_n = (n/\log n)^{1/2}$, we get $\frac{\log m_{n}}{2\log M} \asymp \log n$, and so the result in Theorem~\ref{thmA2} holds throughout the pre-mixing regime when the mixing time satisfies $\psi_n \asymp \log n$. The latter holds for the configuration model with bounded degrees \cite{BenHS}.

When we view the random walk as moving along edges rather than between pairs of vertices, we can define a non-backtracking random walk on a non-simple graph as a random walk that cannot backtrack along the edge it just crossed, but can backtrack along any another edge between the same pair of vertices. In Theorem \ref{thmA2} we consider a non-backtracking random walk on the vertices of the finite exploration of the configuration model. For this purpose, we run the finite non-backtracking random walk on a simple subgraph of each graph exploration, which is justified because for $n$ large it behaves like a branching process (see \eqref{CM-new-2}).


\subsection{Convergence in the joint regime under a uniformity assumption}

In Theorem~\ref{thmA2} we considered non-backtracking exploration and did not address backtracking exploration. Under a uniformity assumption we can derive a limit for both explorations. Define
\begin{align*}
\Psi(N)=\sup_{k,n\geq N}\mathrm{d}_{P}(\mu_{n}^{(k)},\mu_{n}^{(\infty)}),
\end{align*}
where $\mathrm{d}_{P}$ denotes the Prohorov metric \cite[Section 6]{Billingsley}.

\begin{theorem} 
\label{thm4}
Suppose that the assumptions in Theorem \ref{thm3} hold. If $\Psi(N)\prob 0$ as $N\to\infty$, then $\mu_{n}^{(k_n)} \weak \mu$ as $n\to\infty$ in probability for any $k_n$ such that $\lim_{n\to\infty} k_n = \infty$.
\end{theorem}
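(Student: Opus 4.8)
The strategy is to combine the two limiting statements already at our disposal: Theorem~\ref{thm3}(b), which says $\mu_n^{(\infty)} \weak \mu$ in probability as $n\to\infty$, and the uniform control hypothesis $\Psi(N) \prob 0$, which bounds how far $\mu_n^{(k)}$ can stray from $\mu_n^{(\infty)}$ once both $n$ and $k$ are large. The natural metric to work in is the Prohorov metric $\mathrm{d}_P$, since it metrizes weak convergence on $\R$ and $\Psi$ is already phrased in terms of it; it therefore suffices to show $\mathrm{d}_P(\mu_n^{(k_n)}, \mu) \prob 0$ as $n\to\infty$.

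First I would invoke the triangle inequality for $\mathrm{d}_P$:
\begin{align*}
\mathrm{d}_P\big(\mu_n^{(k_n)}, \mu\big) \leq \mathrm{d}_P\big(\mu_n^{(k_n)}, \mu_n^{(\infty)}\big) + \mathrm{d}_P\big(\mu_n^{(\infty)}, \mu\big).
\end{align*}
The second term tends to $0$ in probability by Theorem~\ref{thm3}(b) (note that the hypotheses of Theorem~\ref{thm3} are assumed, so both (a) and (b) are available, and in particular the uniform integrability and local convergence conditions needed for (b) hold). For the first term, fix $\varepsilon>0$ and $\eta>0$; choose $N$ so large that $\P\{\Psi(N) > \varepsilon\} < \eta$. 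Since $k_n \to \infty$, there is $n_0$ such that $k_n \geq N$ for all $n \geq n_0$; then for $n \geq \max(n_0, N)$ we have, on the event $\{\Psi(N)\leq\varepsilon\}$, that $\mathrm{d}_P(\mu_n^{(k_n)}, \mu_n^{(\infty)}) \leq \Psi(N) \leq \varepsilon$ because both indices $k_n$ and $n$ are at least $N$. Hence $\P\{\mathrm{d}_P(\mu_n^{(k_n)}, \mu_n^{(\infty)}) > \varepsilon\} < \eta$ for all large $n$, i.e. the first term also vanishes in probability. Combining the two bounds and letting $\varepsilon \downarrow 0$ along a countable sequence finishes the argument.

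The only mild subtlety — and the step I would be most careful about — is the interplay between the randomness of the graph $G_n$ and the almost sure statement in Theorem~\ref{thm3}(a): $\mu_n^{(\infty)}$ is itself a random measure (a functional of $G_n$), $\Psi(N)$ is a random variable depending on $G_n$ (and, implicitly, on the whole family indexed by $k\geq N$, $n\geq N$), and $\mu_n^{(k_n)}$ is random as well. One must make sure all these objects are coupled on a common probability space, which they are, since everything is built from the sequence $(G_n)_n$ (and the walks are integrated out in the definition of $\Delta_{i,n}^{(k)}$). Once that bookkeeping is in place, the proof is essentially the $\varepsilon$–$\eta$ manipulation above; there is no hard analytic obstacle, the content of the theorem having been front-loaded into the hypothesis $\Psi(N)\prob 0$ and into Theorem~\ref{thm3}. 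It is worth remarking that this is exactly why Theorem~\ref{thm4} is labelled an ``observation'' carrying a ``strong assumption'': the uniform-in-$(k,n)$ control $\Psi(N)\prob 0$ is precisely the ingredient that decouples the order of limits, and verifying it in concrete models (as opposed to assuming it) is where the real work would lie.
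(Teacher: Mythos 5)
Your proof is correct and follows essentially the same route as the paper: the triangle inequality for the Prohorov metric, with $\Psi(N)$ controlling $\mathrm{d}_P(\mu_n^{(k_n)},\mu_n^{(\infty)})$ once $n$ and $k_n$ both exceed $N$, and Theorem~\ref{thm3}(b) (via the equivalence of weak convergence and Prohorov convergence on Polish spaces) handling $\mathrm{d}_P(\mu_n^{(\infty)},\mu)$. The only cosmetic difference is that the paper passes through a Skorohod coupling to make both convergences almost sure before running the $\epsilon$-argument, whereas you run a direct $\varepsilon$--$\eta$ union bound in probability; both are valid.
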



\section{Proof of the main theorems}
\label{sec:proofs}

In this section we provide the proofs of the theorems in Sections~\ref{sec:RWRG}--\ref{sec:orderoflimits}. Section~\ref{ss:amlfb} proves Theorem~\ref{thm0}. Section~\ref{ss:cv} proves Theorems~\ref{thm1}, \ref{thm2}, \ref{thm3} and \ref{thm6}. Section~\ref{sec:doublelimit} proves Theorems~\ref{thmA1}, \ref{thmA2} and \ref{thm4}. 


\subsection{Average multi-level friendship bias}
\label{ss:amlfb}

\begin{proof}[Proof of Theorem \ref{thm0}]
First consider the non-backtracking exploration. For $k\in\N$, using \eqref{transitions1}--\eqref{transitions2} and symmetrisation we get
\begin{align*}
\dfrac{1}{n}\sum_{i_{0}\in [n]}\sum_{i_{k}\in [n]} P_{n}^{(k)}(i_{0},i_{k}) \,\dn_{i_{k}} 
&=\dfrac{1}{2n}\sum_{i_{0}\in [n]}\sum_{i_{k}\in [n]}\Big(P_{n}^{(k)}(i_{0},i_{k})\,
\dn_{i_{k}}+P_{n}^{(k)}(i_{k},i_{0})\,\dn_{i_{0}}\Big)\\
&=\dfrac{1}{2n}\sum_{(i_{0},i_{1},\ldots ,i_{k})\in W_{n,k}}\bigg(\frac{\dn_{i_{k}}}{\dn_{i_{0}}}
+\frac{\dn_{i_{0}}}{\dn_{i_{k}}}\bigg)\,\frac{1}{\prod_{l=1}^{k-1}(\dn_{i_{l}}-1)}
\end{align*}
and
\begin{align*}
\dfrac{1}{n}\sum_{i_{0}\in [n]} \dn_{i_{0}}
&=\dfrac{1}{2n}\sum_{i_{0}\in [n]}\sum_{i_{k}\in [n]}\Big(P_{n}^{(k)}(i_{0},i_{k})\,\dn_{i_{0}}
+P_{n}^{(k)}(i_{k},i_{0})\,\dn_{i_{k}}\Big)\\
&=\dfrac{1}{n}\sum_{(i_{0},i_{1},\ldots ,i_{k})\in W_{n,k}}\frac{1}{\prod_{l=1}^{k-1}(\dn_{i_{l}}-1)}.
\end{align*}
Hence
\begin{align*}
\Delta_{[n]}^{(k)} &=\dfrac{1}{2n}\sum_{(i_{0},i_{1},\ldots ,i_{k})\in W_{n,k}}\left(\sqrt{\frac{\dn_{i_{k}}}{\dn_{i_{0}}}}
-\sqrt{\frac{\dn_{i_{0}}}{\dn_{i_{k}}}}\,\right)^{2}\,\frac{1}{\prod_{l=1}^{k-1}(\dn_{i_{l}}-1)}\geq 0,
\end{align*}
with equality if and only if the degrees of the endpoints of each non-backtracking walk in $W_{n,k}$ are the same. 

Next consider the backtracking exploration. For $k\in\N$, set 
\[
V_{n,k}=\Big\{(i_{0},i_{1},\ldots ,i_{k})\in [n]^{k+1}\colon\,\{i_{j},i_{j+1}\}\in E(G_{n})\text{ for all }0\leq j\leq k-1\Big\}.
\]
Then \eqref{BT-transition}, along with the Chapman-Kolmogorov equation, implies that
\begin{align*}
\dfrac{1}{n}\sum_{i_{0}\in [n]}\sum_{i_{k}\in [n]} P_{n}^{(k)}(i_{0},i_{k}) \,\dn_{i_{k}} 
&=\dfrac{1}{2n}\sum_{i_{0}\in [n]}\sum_{i_{k}\in [n]}\Big(P_{n}^{(k)}(i_{0},i_{k})\,\dn_{i_{k}}
+P_{n}^{(k)}(i_{k},i_{0})\,\dn_{i_{0}}\Big)\\
&=\dfrac{1}{2n}\sum_{(i_{0},i_{1},\ldots ,i_{k})\in V_{n,k}}\bigg(\frac{\dn_{i_{k}}}{\dn_{i_{0}}}
+\frac{\dn_{i_{0}}}{\dn_{i_{k}}}\bigg)\,\frac{1}{\prod_{l=1}^{k-1}\dn_{i_{l}}}
\end{align*}
and
\begin{align*}
\dfrac{1}{n}\sum_{i_{0}\in [n]} \dn_{i_{0}}
&=\dfrac{1}{2n}\sum_{i_{0}\in [n]}\sum_{i_{k}\in [n]}\Big(P_{n}^{(k)}(i_{0},i_{k})\,\dn_{i_{0}}
+P_{n}^{(k)}(i_{k},i_{0})\,\dn_{i_{k}}\Big)\\
&=\dfrac{1}{n}\sum_{(i_{0},i_{1},\ldots ,i_{k})\in V_{n,k}}\frac{1}{\prod_{l=1}^{k-1}\dn_{i_{l}}}.
\end{align*}
Hence
\begin{align*}
\Delta_{[n]}^{(k)} &=\dfrac{1}{2n}\sum_{(i_{0},i_{1},\ldots ,i_{k})\in V_{n,k}}\left(\sqrt{\frac{\dn_{i_{k}}}{\dn_{i_{0}}}}
-\sqrt{\frac{\dn_{i_{0}}}{\dn_{i_{k}}}}\,\right)^{2}\,\frac{1}{\prod_{l=1}^{k-1}\dn_{i_{l}}} \geq 0,
\end{align*}
with equality if and only if the degrees of the endpoints of each backtracking walk in $V_{n,k}$ are the same. 

We continue with the backtracking exploration. For odd values of $k$, by traversing $k$ times from each edge, it is seen that the equality holds if and only if every connected component of the graph is regular. But for even values of $k$ this is equivalent to the case where each connected component of the graph is either regular or bi-regular bipartite. Indeed, in the case when each connected component of the graph is either regular or bi-regular bipartite, and $k$ is even, it is clear that the degrees of the endpoints of each backtracking walk in $V_{n,k}$ are equal. However, to prove the converse, first note that for any even $l$, equality of endpoint degrees in walks of $V_{n,l}$ holds if and only if the same equality holds for walks in $V_{n,2}$. Now, consider an arbitrary connected component of $G_{n}$, say $C_n$, and let $k$ be an even integer. Suppose that the degrees of the endpoints of each walk in $V_{n,k}$ are equal and that $C_{n}$ is not a bipartite graph. Since $C_{n}$ is not bipartite, there is an odd cycle in $C_{n}$. By traversing the odd cycle as many times as needed, any two vertices of $C_{n}$ can be joined by a walk in $V_{n,l}$ with some even $l$, which ensures that they must have the same degrees, indicating that $C_{n}$ is regular. On the other hand, if the degrees of the endpoints of each walk in $V_{n,k}$ are the same and $C_{n}$ is a bipartite graph, then any two vertices on the same side can be joined by a walk in $V_{n,l}$ with an even $l$. Therefore they must have the same degrees, implying that $C_{n}$ is bi-regular bipartite.
\end{proof}


\subsection{Convergence of the measures}
\label{ss:cv}
In this section we analyse the convergence of the associated measures. We refer to Section~\ref{sec:shortlevel} for the definition of local convergence.


\subsubsection{Convergence for short levels}

\begin{proof}[Proof of Theorem \ref{thm1}]
We give the proof for the non-backtracking exploration only. A similar approach can be applied to the backtracking exploration.

Define an everywhere locally finite modification of $(G_{\infty},\phi)$ by
\begin{align}
\label{Ginfinity}
(G_{\infty}^{\prime}(\omega),\phi) = 
\left\{
\begin{array}{ll}
(G_{\infty}(\omega),\phi), &\text{if }(G_{\infty}(\omega),\phi) \text{ is locally finite}, \\
(H,\phi), &\text{otherwise},
\end{array} \right.
\end{align}
where $(H,\phi)$ is an arbitrary locally finite infinite deterministic rooted tree (rooted at $\phi$). To simplify the notation, we assume, without loss of generality, that $(G_{\infty}, \phi) \equiv (G_{\infty}^{\prime}, \phi)$. This modification ensures that the quantities defined below are well defined for all realisations, since $(G_\infty^\prime,\phi)$ is now always locally finite. Moreover, let $\mathscr{C}(G_n, j)$ denote the connected component of vertex $j \in V(G_n)$ in the graph $G_n$, viewed as a rooted graph with $j$ as the root vertex. When $G_n$ is connected, we simply have $\mathscr{C}(G_{n},j)=(G_{n},j)$. 

Fix $k\in\N$. Let $U_{n}$ be a uniformly chosen vertex from $[n]$. The proof below here follows the lines of the proof of \cite[Theorems 2.3--2.4]{HHP}, and is a direct consequence of the definition of local convergence of random graphs and weak convergence of measures.

\medskip\noindent
(a) Since $G_{n}$ converges locally in probability to $(G_{\infty},\phi)$, for every bounded and continuous function $h\colon\,(\mathscr{G},\mathrm{d}_{\mathscr{G}}) \to (\R,\vert\cdot\vert)$ we have 
\begin{equation}
\label{a-1}
\E_{\bar{\Lambda}_{n}}\big[h(\mathscr{C}(G_{n},U_{n})) \mid G_{n}\big] 
= \dfrac{1}{n} \sum_{i \in [n]} h(\mathscr{C}(G_{n},i)) \prob \E_{\bar{\Lambda}}\big[h((G_{\infty},\phi))\big],
\qquad n\to\infty,
\end{equation}
where the expectation in the left-hand side is the conditional expected value given $G_{n}$ when $(G_{n},U_{n})$ has the joint law $\bar{\Lambda}_{n}$. 

Now, let $f\colon\,(\R ,\vert\cdot\vert) \to (\R ,\vert\cdot\vert)$ be an arbitrary bounded and continuous function. Define the function $h\colon\,(\mathscr{G},\mathrm{d}_{\mathscr{G}}) \to (\R,\vert\cdot\vert)$ as follows:
\begin{equation}
\label{h}
h((G,o)) = (f\circ g)((G,o)),
\end{equation}
where, with $i_{0}=o$, the function $g\colon\,(\mathscr{G},\mathrm{d}_{\mathscr{G}}) \to (\R,\vert\cdot\vert)$ is defined as $g((G,o))=\Delta_{o,k}^{(G)}$ with
\begin{equation}
\label{deltaoG}
\Delta_{o,k}^{(G)}=\sum_{(i_{1},\ldots,i_{k})\in (V(G))^{k}}\frac{A^{(G)}_{i_{0},i_{1}}}{d^{(G)}_{i_0}}
\prod_{l=1}^{k-1}\frac{A^{(G)}_{i_{l},i_{l+1}}\mathbbm{1}_{\{i_{l-1}\neq i_{l+1}\}}}{d^{(G)}_{i_{l}}-1}\,
d^{(G)}_{i_{k}}-d^{(G)}_{i_{0}}.
\end{equation}
Note that the boundedness of $f$ implies the boundedness of $h$. Also, for any $(G_1,o_1),(G_2,o_2)\in\mathscr{G}$, if $ \mathrm{d}_{\mathscr{G}}((G_1,o_1),(G_2,o_2))<\frac{1}{k+1}$, then 
\begin{equation*}
\sup\big\{r\geq 0\colon\,B_{r}^{(G_1)}(o_1) \simeq B_{r}^{(G_2)}(o_2)\big\}>k.
\end{equation*}
This implies that $B_{r}^{(G_1)}(o_1) \simeq B_{r}^{(G_2)}(o_2)$ for all $r\in\{0,1,\ldots ,k\}$. Consequently, $g((G_1,o_1))=g((G_2,o_2))$, which demonstrates that $g$ is continuous. Therefore $h$, being a composition of two continuous functions, is also continuous. Furthermore, $h(\mathscr{C}(G_{n},i)) = f(\Delta_{i,n}^{(k)})$ and $h((G_{\infty},\phi))=f(\Delta_{\phi}^{(k)})$ a.s. Inserting this into \eqref{a-1}, we get
\begin{equation*}
\int_\R f\dd\mu_{n}^{(k)} =\dfrac{1}{n} \sum_{i \in [n]} f(\Delta_{i,n}^{(k)}) \prob 
\E_{\bar{\Lambda}}\big[f(\Delta_{\phi}^{(k)})\big]=\int_\R f\dd\mu^{(k)}_{\infty},
\end{equation*}
which completes the proof that $\mu_{n}^{(k)} \weak \mu^{(k)}_{\infty}$ as $n\to\infty$ in probability. 

Suppose that $A\in\mathcal{B}(\R)$, and consider the function $h\colon\,(\mathscr{G},\mathrm{d}_{\mathscr{G}}) \to (\R,\vert\cdot\vert)$ defined by 
\begin{align}
\label{hindicator}
h((G,o))=\mathbbm{1}_{\{\Delta_{o,k}^{(G)}\in A\}},
\end{align}
with $\Delta_{o,k}^{(G)}$ defined as in \eqref{deltaoG}. For any $(G_1,o_1),(G_2,o_2)\in\mathscr{G}$, if $ \mathrm{d}_{\mathscr{G}}((G_1,o_1),(G_2,o_2))<\frac{1}{k+1}$, then $B_{r}^{(G_1)}(o_1) \simeq B_{r}^{(G_2)}(o_2)$ for $r\in\{0,1,\ldots ,k\}$. Hence, $h((G_1,o_1))=h((G_2,o_2))$, which implies that $h$ is continuous. In this way, from \eqref{a-1}, we get $\mu_{n}^{(k)}(A)\prob\mu^{(k)}_{\infty}(A)$. Applying the dominated convergence theorem, we also get that $\tilde{\mu}_{n}^{(k)}(A)\to\mu^{(k)}_{\infty}(A)$.

\medskip\noindent
(b) Let $\mu_{i,n}^{(k)}(\cdot):=\E_{n}[\delta_{\Delta_{i,n}^{(k)}}(\cdot)]$, which is the law of $\Delta_{i,n}^{(k)}$. For an arbitrary bounded and continuous function $f\colon\,(\R,\vert\cdot\vert) \to (\R,\vert\cdot\vert)$ we have
\begin{align}
\label{weaknew0}
\int_\R f \dd\tilde{\mu}_{n}^{(k)} = \dfrac{1}{n} \sum_{i \in [n]} \int_\R f \dd\mu_{i,n}^{(k)}
=\dfrac{1}{n}\sum_{i \in [n]} \E_{n}\big[f(\Delta_{i,n}^{(k)})\big].
\end{align}
Let $h$ be as in \eqref{h}. Since $G_{n}$ converges locally weakly to $(G_{\infty},\phi)$, we must have 
\begin{align}
\label{weaknew1}
\E_{\bar{\Lambda}_{n}}\big[h(\mathscr{C}(G_{n},U_{n}))\big] \to \E_{\bar{\Lambda}}\big[h((G_{\infty},\phi))\big],
\qquad n\to\infty,
\end{align}
where the expectation in the left-hand side is the expected value when $(G_{n},U_{n})$ has the joint law $\bar{\Lambda}_{n}$. But 
\begin{align}
\E_{\bar{\Lambda}_{n}}\big[h(\mathscr{C}(G_{n},U_{n}))\big] 
= \E_{n}\Big[\E_{\bar{\Lambda}_{n}}\big[h(\mathscr{C}(G_{n},U_{n}))\mid G_{n}\big]\Big]
&= \E_{n}\Big[\dfrac{1}{n}\sum_{i \in [n]}h(\mathscr{C}(G_{n},i))\Big] \label{weaknew02}\\
&=\dfrac{1}{n}\sum_{i \in [n]} \E_{n}\big[f(\Delta_{i,n}^{(k)})\big] \label{weaknew2}
\end{align}
and $\E_{\bar{\Lambda}}[h((G_{\infty},\phi))]=\E_{\bar{\Lambda}}[f(\Delta_{\phi}^{(k)})]$. Hence, by \eqref{weaknew0}, \eqref{weaknew1} and \eqref{weaknew2},
\begin{align*}
\int_\R f \dd\tilde{\mu}_{n}^{(k)} 
\to \E_{\bar{\Lambda}}\big[f(\Delta_{\phi}^{(k)})\big] = \int_\R f\dd \mu^{(k)}_{\infty},
\end{align*}
which completes the proof that $\tilde{\mu}_{n}^{(k)} \weak \mu^{(k)}_{\infty}$ as $n\to\infty$.

Finally, let $A\in\mathcal{B}(\R)$. By substituting the bounded and continuous function $h$ defined in \eqref{hindicator} into \eqref{weaknew1}, and using \eqref{weaknew02}, we conclude that $\tilde{\mu}_{n}^{(k)}(A)\to\mu^{(k)}_{\infty}(A)$.
\end{proof}


\subsubsection{Identification of the limit for short levels}
In this section, we prove Theorem \ref{thm2}. To begin, we shall first state a general lemma that will also be utilised in the proof of Theorem \ref{thm2}.
\begin{lemma}\label{lem0}
Let $(H_{n})_{n\in\N}$ be a sequence of finite non-null random graphs that converges locally weakly to an almost surely locally finite, connected rooted graph $(H, \varphi)$ with a deterministic law $\nu$. If the degrees of the neighbours of the root $\varphi$ are i.i.d., and independent of $d_{\varphi}^{(H)}$, and if $d_{\varphi}^{(H)}$ has a finite first moment, then the degree distribution of the neighbours of $\varphi$ is size-biased with respect to the degree distribution of $\varphi$.
\end{lemma}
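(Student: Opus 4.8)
The plan is to exploit the local weak convergence together with a mass-transport / reversibility identity on the limit tree. Concretely, I would compute $\E_\nu[d_\varphi^{(H)}\,\mathbbm{1}_{\{d_{i_1}=k\}}]$, where $i_1$ is a uniformly chosen neighbour of $\varphi$, in two different ways. On one hand, using the stated independence of the neighbour degrees from $d_\varphi^{(H)}$, this equals $\E_\nu[d_\varphi^{(H)}]\,\P_\nu\{d_{i_1}=k\}$, where $\P_\nu\{d_{i_1}=k\}$ is exactly the degree distribution of the neighbours of the root that we want to identify. On the other hand, I would like to show this quantity equals $k\,\P_\nu\{d_\varphi^{(H)}=k\}$, which is (up to the normalising constant $\E_\nu[d_\varphi^{(H)}]$) the size-biased version of the root-degree distribution; combining the two gives
\begin{equation*}
\P_\nu\{d_{i_1}=k\} = \frac{k\,\P_\nu\{d_\varphi^{(H)}=k\}}{\E_\nu[d_\varphi^{(H)}]},
\end{equation*}
which is the assertion.

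The mechanism for the second computation is a handshake-type identity along edges incident to the root, transported from the finite graphs $H_n$ via local weak convergence. First I would set up the finite-graph statement: for a uniformly chosen vertex $U_n$ in $H_n$ and a uniformly chosen neighbour $V_n$ of $U_n$, one has, by summing over ordered edges,
\begin{equation*}
\E\Big[d^{(H_n)}_{U_n}\,\mathbbm{1}_{\{d^{(H_n)}_{V_n}=k\}}\Big]
= \frac{1}{|V(H_n)|}\sum_{i\in V(H_n)} \sum_{\substack{j:\,\{i,j\}\in E(H_n)}} \mathbbm{1}_{\{d_j^{(H_n)}=k\}}
= \frac{1}{|V(H_n)|}\sum_{\substack{j:\,d_j^{(H_n)}=k}} d_j^{(H_n)}
= k\,\P\{d^{(H_n)}_{U_n}=k\},
\end{equation*}
where the middle step just re-indexes the sum over ordered edges by their far endpoint. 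The left-hand side is $\E_{\bar\mu_n}$ of a bounded continuous (in fact $B_1$-local) functional of the rooted graph $(H_n,U_n)$ — it only depends on $B_1^{(H_n)}(U_n)$ — and the right-hand side is $\E_{\bar\mu_n}$ of another $B_0$-local bounded functional. Passing $n\to\infty$ using local weak convergence to $(H,\varphi)$ turns this into
\begin{equation*}
\E_\nu\Big[d^{(H)}_\varphi\,\P^{\mathrm{nbr}}_\varphi\{d=k\}\Big] = k\,\P_\nu\{d^{(H)}_\varphi=k\},
\end{equation*}
where $\P^{\mathrm{nbr}}_\varphi\{d=k\}$ denotes the fraction of neighbours of $\varphi$ with degree $k$; and under the i.i.d.\ assumption this conditional fraction has expectation $\P_\nu\{d_{i_1}=k\}$ given $d_\varphi^{(H)}$, while the product-of-independent structure lets me factor the left side as $\E_\nu[d^{(H)}_\varphi]\,\P_\nu\{d_{i_1}=k\}$. (A small point: the functional $(G,o)\mapsto d^{(G)}_o\cdot\frac{1}{d^{(G)}_o}\#\{j\sim o: d^{(G)}_j=k\}$ is genuinely continuous and bounded on $\mathscr G$, so no truncation is needed; the $d^{(H)}_\varphi$ factor is bounded on each ball but one should be slightly careful, which is where the finite-first-moment hypothesis enters, via a standard uniform-integrability / truncation argument if one wants $\E[d^{(H_n)}_{U_n}] \to \E_\nu[d^{(H)}_\varphi]$ rather than just the indicator-weighted version — but for the identity above only the bounded functional $\mathbbm{1}_{\{d_{U_n}^{(H_n)}\le M\}}d_{U_n}^{(H_n)}\mathbbm 1_{\{d^{(H_n)}_{V_n}=k\}}$ needs to pass to the limit, and then one lets $M\to\infty$ using finiteness of the first moment to control the tail.)

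The main obstacle I expect is precisely this interchange of limits: the edge-counting identity on $H_n$ is exact and elementary, and local weak convergence handles bounded local functionals for free, but the weight $d_{U_n}^{(H_n)}$ is unbounded, so one must justify $\E[d^{(H_n)}_{U_n}\mathbbm 1_{\{d^{(H_n)}_{V_n}=k\}}] \to \E_\nu[d^{(H)}_\varphi\,\P^{\mathrm{nbr}}_\varphi\{d=k\}]$. The cleanest route is to truncate at level $M$, apply local weak convergence to the bounded functional, then send $M\to\infty$; the error term $\E[d^{(H_n)}_{U_n}\mathbbm 1_{\{d^{(H_n)}_{U_n}>M\}}]$ is controlled once we know $(d^{(H_n)}_{U_n})_n$ is uniformly integrable, and local weak convergence plus convergence of the first moments $\E[d^{(H_n)}_{U_n}]\to\E_\nu[d^{(H)}_\varphi]<\infty$ gives exactly that (this is where the finite-first-moment assumption on $d^{(H)}_\varphi$ is used, possibly supplemented by the hypothesis that the $H_n$ are the relevant approximating sequence so that degree moments converge). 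Everything else — the reversibility rewriting and the factorisation under independence — is routine.
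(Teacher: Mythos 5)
Your architecture is essentially the paper's: everything reduces to the edge--reversal identity $\E\big[\#\{w\sim \text{root}\colon d_w=k\}\big]=k\,\P\{d_{\text{root}}=k\}$, after which the i.i.d./independence hypothesis factorises the left-hand side as $\E[d_{\text{root}}]\cdot\P\{d_{w_0}=k\}$ and one divides by $0<\E_\nu[d_\varphi^{(H)}]<\infty$ (this division is the only place the finite-first-moment hypothesis is genuinely needed). The difference is where the identity is established. The paper gets it directly on the limit object: a local weak limit of finite graphs is unimodular, hence involution invariant (Aldous--Lyons, Prop.\ 2.2), and applying involution invariance to $h_k((G,u,v))=\mathbbm{1}_{\{d_v^{(G)}=k\}}$ yields the identity for $\nu$ with no interchange of limits. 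You instead prove the identity on $H_n$, where it is exact, and pass to the limit --- and that passage is where your argument has a real gap.

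The gap is the unbounded side. To get $\E\big[\#\{j\sim U_n\colon d_j^{(H_n)}=k\}\big]\to\E_\nu\big[\#\{w\in\partial_\varphi\colon d_w^{(H)}=k\}\big]$ you truncate on $\{d_{U_n}^{(H_n)}\le M\}$ and then need $\sup_n\E\big[d_{U_n}^{(H_n)}\mathbbm{1}_{\{d_{U_n}^{(H_n)}>M\}}\big]\to0$ as $M\to\infty$, i.e.\ uniform integrability of the root degrees. This is not a hypothesis of the lemma and does not follow from the hypotheses: local weak convergence only gives $\liminf_n\E[d_{U_n}^{(H_n)}]\ge\E_\nu[d_\varphi^{(H)}]$. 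For instance, $H_n$ consisting of $n$ disjoint edges together with one clique on $\lfloor\sqrt n\rfloor$ vertices converges locally weakly to a single rooted edge, yet $\E[d_{U_n}^{(H_n)}]\to 3/2\neq 1=\E_\nu[d_\varphi^{(H)}]$, so the degrees are not uniformly integrable (the lemma's conclusion still holds there --- only your proof route breaks). Your parenthetical ``possibly supplemented by the hypothesis that degree moments converge'' is precisely the extra assumption being smuggled in. The gap is repairable without new hypotheses: apply the edge--reversal identity once more to the tail term, $\E\big[\#\{j\sim U_n\colon d_j=k\}\,\mathbbm{1}_{\{d_{U_n}>M\}}\big]=\E\big[\#\{j\sim U_n\colon d_j>M\}\,\mathbbm{1}_{\{d_{U_n}=k\}}\big]\le k\,\P\{d_{U_n}=k,\ \max_{j\sim U_n}d_j>M\}$, which is the expectation of a bounded continuous local functional and hence converges to $k\,\P_\nu\{d_\varphi=k,\ \max_{w\in\partial_\varphi}d_w>M\}$, vanishing as $M\to\infty$ because $(H,\varphi)$ is locally finite. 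Alternatively, just cite involution invariance of $\nu$ as the paper does. (A minor slip: the functional $(G,o)\mapsto\#\{j\sim o\colon d_j^{(G)}=k\}$ is local and continuous but \emph{not} bounded on $\mathscr{G}$, contrary to your parenthetical claim.)
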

\begin{proof}
Without loss of generality, assume that $(H, \varphi)$ is everywhere locally finite, following a similar approach to the construction in \eqref{Ginfinity}. As an extension of $(\mathscr{G}, \mathrm{d}_{\mathscr{G}})$, we define $\mathscr{G}_{\star}$ as the topological space of isomorphism classes of connected locally finite graphs with an ordered pair of root vertices, equipped with the topology induced by a natural extension of the metric $\mathrm{d}_{\mathscr{G}}$. 

Since $(H, \varphi)$ is a local weak limit, the probability measure $\nu$ is unimodular and obeys
the mass-transport principle \cite[Section 3.2]{BS}. Therefore, it is involution invariant \cite[Propositon 2.2]{AL}, i.e. for all Borel function $h:\mathscr{G}_{\star}\to [0,\infty)$,
\begin{align*}
\E_{\nu}\bigg[\sum_{w\in \partial_{\varphi}} h\big((H, \varphi ,w)\big)\bigg]=\E_{\nu}\bigg[\sum_{w\in\partial_{\varphi}} h\big((H, w,\varphi)\big)\bigg],
\end{align*}
where $\E_{\nu}$ is the expectation with respect to the random graph $(H, \varphi)$, and $\partial_{\varphi}=V(B_{1}^{(H)}(\varphi))\setminus\{\varphi\}$ is the set of neighbours of $\varphi$. In particular, for each $k\in\N_{0}$, by considering the Borel function $h_{k}$ on $\mathscr{G}_{\star}$ as $h_{k}((G,u,v))=\mathbbm{1}_{\{d_{v}^{(G)}=k\}}$, we have
\begin{align*}
\E_{\nu}\bigg[\sum_{w\in \partial_{\varphi}} \mathbbm{1}_{\{d_{w}^{(H)}=k\}}\bigg]=\E_{\nu}\bigg[k\,\mathbbm{1}_{\{d_{\varphi}^{(H)}=k\}}\bigg]=k\,\P_{\nu}\big\{d_{\varphi}^{(H)}=k\big\},
\end{align*}
where $\P_{\nu}$ is the probability measure on the underlying space of the random graph $(H, \varphi)$. Noting that the degrees of the neighbours of the root $\varphi$ are i.i.d. and independent of $d_{\varphi}^{(H)}$, we conclude that if $w_{0}$ is a neighbour of $\varphi$, then
\begin{align*}
\E_{\nu}\bigg[\sum_{w\in \partial_{\varphi}} \mathbbm{1}_{\{d_{w}^{(H)}=k\}}\bigg]
&=\sum_{j=0}^{\infty}\E_{\nu}\Big[\sum_{w\in \partial_{\varphi}} \mathbbm{1}_{\{d_{w}^{(H)}=k\}}\mid d_{\varphi}^{(H)}=j \Big]\,\P_{\nu}\big\{d_{\varphi}^{(H)}=j\big\}
\\&=\E_{\nu}\big[d_{\varphi}^{(H)}\big]\,\P_{\nu}\big\{d_{w_{0}}^{(H)}=k\big\}.
\end{align*}
Therefore
\begin{align*}
\E_{\nu}\big[d_{\varphi}^{(H)}\big]\,\P_{\nu}\big\{d_{w_{0}}^{(H)}=k\big\}=k\,\P_{\nu}\big\{d_{\varphi}^{(H)}=k\big\}.
\end{align*}
In other words, by noting that $0<\E_{\nu}[d_{\varphi}^{(H)}]<\infty$, we have
\begin{align*}
\P_{\nu}\big\{d_{w_{0}}^{(H)}=k\big\}=\dfrac{k\,\P_{\nu}\big\{d_{\varphi}^{(H)}=k\big\}}{\E_{\nu}[d_{\varphi}^{(H)}]},
\end{align*}
which completes the proof of the lemma.
\end{proof}

\begin{proof}[Proof of Theorem \ref{thm2}]
(a) Let us label the vertices of $(G_{\infty}, \phi)$ excluding the root by $\mathbb{N}$ in an arbitrary manner, i.e., $V(G_\infty) = \mathbb{N} \cup \{\phi\}$. Let $m_{1}=\E_{\bar{\Lambda}}[d_{1}]$ and $m_{2}=\E_{\bar{\Lambda}}[d_{1}^{2}]$. For $k\in\N$, set 
\[
\chi_{k}=\sum_{j\in \N} P^{k}(\phi, j)\, d^{(G_\infty)}_{j}.
\]
To complete the proof, it suffices to show that $\chi_{k}\prob m_1+1$ as $k\to\infty$ for the non-backtracking exploration, where the additional $1$ comes from the fact that $m_1$ represents only the expected number of descendants. This immediately allows us to conclude, by applying the continuous mapping theorem, that
\[ 
\Delta_{\phi}^{(k)}=\chi_{k}-d_{\phi}\prob m_1 +1-d_{\phi},\quad\quad k\to\infty,
\]
which settles the claim by noting that, according to Lemma \ref{lem0}, $m_{1}+1=m^{(2)}/m^{(1)}$.

For the non-backtracking exploration, 
\[
\chi_{k}=\sum_{j\in \N} P^{k}(\phi, j)\, d_{j}+1.
\]
 Using conditions (I) and (II), we have that $d_j$ and $P^k(\phi, j)$ are independent for each $j \in \N$. Moreover, $P^k(\phi, j)$ is non-zero only if $j$ is in the $k$th-generation after $\phi$ on the tree, i.e. when $\mathrm{dist}_{G_{\infty}}(\phi, j)=k$. Therefore, $P^k(\phi, i)P^k(\phi, j)$ is non-zero only if both $i$ and $j$ are in the $k$th-generation, and in this case, if $i \neq j$, we must have that $P^k(\phi, i)P^k(\phi, j)$ is independent of $d_i$ and $d_j$.
Therefore, by the monotone convergence theorem,
\begin{align}\label{nk0}
\E_{\bar{\Lambda}}\big[\chi_{k}\big]=m_1+1
\end{align}
and
\begin{align*}
\E_{\bar{\Lambda}}\bigg[\sum_{i\in\N}\sum_{\substack{j \in \N \\ j \neq i}}P^{k}(\phi, i)\,P^{k}(\phi, j)\,d_{i}\, d_{j}\bigg]\leq m_{1}^{2}.
\end{align*}
Hence, by the monotone convergence theorem,
\begin{align*}
\E_{\bar{\Lambda}}\big[\chi_{k}^{2}\big]&\leq (m_{1}+1)^{2}+m_{2}\,\E_{\bar{\Lambda}}\bigg[\sum_{i\in\N}\big(P^{k}(\phi, i)\big)^2\bigg]\\&
\leq (m_{1}+1)^{2}+m_{2}\,\E_{\bar{\Lambda}}\bigg[\sup_{i\in\N}P^{k}(\phi, i)\bigg].
\end{align*}
In the following, we prove that 
\begin{align}\label{Esup}
\E_{\bar{\Lambda}}\bigg[\sup_{i\in\N}P^{k}(\phi, i)\bigg]
\to 0\quad \text{as}\quad k\to\infty.
\end{align}
Consequently, by applying Chebyshev's inequality, we can conclude that $\chi_{k}\prob m_1+1$ as $k\to\infty$. 

To prove \eqref{Esup}, note that \eqref{a-1} holds for every bounded continuous function $h\colon (\mathscr{G}, \mathrm{d}_{\mathscr{G}}) \to (\mathbb{R}, |\cdot|)$. If $h((G,o))$ is the indicator function of the event that not all degrees in the graph $G$ are at least 3 within some arbitrary finite graph distance from $o$, then (similarly as in the proof of the continuity of \eqref{hindicator} in the proof of Theorem \ref{thm1})  it follows that $h$ is continuous. 
Indeed, the value of $h((G,o))$ depends only on the finite rooted ball $B_r^{(G)}(o)$ for some fixed radius $r$. 
Since the topology induced by $\mathrm{d}_{\mathscr{G}}$ corresponds to local convergence of rooted graphs, any function that depends only on a finite neighbourhood of the root is continuous with respect to this topology. For sufficiently large $n$, since the minimum degree in the graph $G_n$ is at least 3, applying this function $h$ we get that the left-hand side of \eqref{a-1} is zero, which implies that the right-hand side of \eqref{a-1} must also be zero. As the distance is arbitrary, it follows that $d_\phi \geq 3$ and $\inf_{i \in \mathbb{N}} d_i \geq 2$ almost surely. Therefore $\sup_{i \in \mathbb{N}} P^k(\phi, i) < 2^{-k}$ almost surely, which proves \eqref{Esup}.

\medskip\noindent
(b) 
Next consider the backtracking exploration. Let $\Omega^{\prime}\in \Omega_{\bar{\Lambda}}$ be such that for all $\omega\in \Omega^{\prime}$ the tree $(G_{\infty}(\omega),\phi)$ is finite and non-bipartite. Then the backtracking random walk $X(\omega)$ on it is ergodic, and so
\begin{align*}
\lim_{k\rightarrow\infty}P^{k}(\phi,j)(\omega) = 
\frac{d_{j}^{(G_{\infty})}(\omega)}{\sum_{j\in V(G_{\infty}(\omega))}d_{j}^{(G_{\infty})}(\omega)}
\end{align*}
for all $j\in V(G_{\infty}(\omega))$ \cite[Theorem 1.8.3]{N}. Hence, as $k\to\infty$,
\begin{align*}
\Delta_{\phi}^{(k)}(\omega)=\sum_{j\in V(G_{\infty}(\omega))} P^{k}(\phi, j)(\omega)\, 
d^{(G_\infty)}_{j}(\omega)-d_{\phi}(\omega)\to \frac{\sum_{j\in V(G_{\infty}(\omega))}
(d_{j}^{(G_{\infty})}(\omega))^2}{\sum_{j\in V(G_{\infty}(\omega))}d_{j}^{(G_{\infty})}(\omega)}
-d_{\phi}(\omega).
\end{align*}
Since $\P_{\bar{\Lambda}}\{\Omega^{\prime}\}=1$, this completes the proof.
\end{proof}


\subsubsection{Convergence and identification of the limit for long levels}

In this section, we prove Theorem \ref{thm3}. To proceed, we first state a general lemma that will also be used in the proof.

\begin{lemma}
\label{lem}
If $(H_{n})_{n\in\N}$ is a sequence of finite random graphs with a deterministic vertex set that converges locally in probability to an almost surely locally finite, connected rooted graph $(H, \varphi)$, and if $((d_{i}^{(H_{n})})^{m})_{i\in V(H_{n}),\, n\in\N}$ is uniformly integrable for some $m \in \N$, then
\begin{align*}
\frac{1}{\#V(H_{n})}\sum_{i\in V(H_{n})}(d_{i}^{(H_{n})})^{j}\prob \E_{H}\big[(d_{\varphi}^{(H)})^{j}\big],\qquad n\to\infty ,
\end{align*}
for all $j\in\{1,\ldots,m\}$, where $\E_{H}$ denotes the expectation with respect to the randomness of the graph $(H, \varphi)$. 
\end{lemma}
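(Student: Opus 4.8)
The plan is to reduce the statement to a standard local-convergence fact: local convergence in probability implies that empirical averages of local functions concentrate, and then handle the unboundedness of $x\mapsto x^j$ via the uniform-integrability hypothesis by a truncation argument. Concretely, for a fixed threshold $L\in\N$ define the truncated local functional $h_L((G,o)) = (d_o^{(G)})^j \wedge L$, which is a bounded continuous function on $(\mathscr{G},\mathrm{d}_{\mathscr{G}})$ (it depends only on $B_0^{(G)}(o)$, hence is locally constant, hence continuous, and it is bounded by $L$). Local convergence in probability of $H_n$ to $(H,\varphi)$ then gives, exactly as in \eqref{a-1},
\begin{align*}
\frac{1}{\#V(H_n)}\sum_{i\in V(H_n)} \big((d_i^{(H_n)})^j \wedge L\big) \prob \E_H\big[(d_\varphi^{(H)})^j \wedge L\big], \qquad n\to\infty,
\end{align*}
for every fixed $L$ and every $j\in\{1,\ldots,m\}$.

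Next I would control the truncation error on both sides uniformly in $n$. On the random-graph side, the difference between the true empirical average and its truncation is
\begin{align*}
0 \le \frac{1}{\#V(H_n)}\sum_{i\in V(H_n)} \Big((d_i^{(H_n)})^j - (d_i^{(H_n)})^j\wedge L\Big) \le \frac{1}{\#V(H_n)}\sum_{i\in V(H_n)} (d_i^{(H_n)})^j\,\mathbbm{1}_{\{(d_i^{(H_n)})^j > L\}}.
\end{align*}
Since $j\le m$, on the event $\{(d_i^{(H_n)})^j > L\}$ we have $(d_i^{(H_n)})^m > L^{m/j}$, and $(d_i^{(H_n)})^j \le (d_i^{(H_n)})^m$ when the degree is at least $1$ (degrees being non-negative integers, the only subtlety is degree $0$, which contributes nothing to either side for $j\ge 1$, or one simply uses $x^j \le x^m + 1$). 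Hence the right-hand side is bounded by the empirical average of $(d_i^{(H_n)})^m\,\mathbbm{1}_{\{(d_i^{(H_n)})^m > L^{m/j}\}}$ (up to harmless constants). The uniform integrability of $((d_i^{(H_n)})^m)_{i\in V(H_n),\,n\in\N}$ — interpreting the empirical average over $i\in V(H_n)$ as an expectation under the uniform law on $V(H_n)$ — says precisely that
\begin{align*}
\sup_{n\in\N}\ \frac{1}{\#V(H_n)}\sum_{i\in V(H_n)} (d_i^{(H_n)})^m\,\mathbbm{1}_{\{(d_i^{(H_n)})^m > R\}} \too 0, \qquad R\to\infty,
\end{align*}
so the truncation error is $o(1)$ uniformly in $n$ as $L\to\infty$. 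On the limit side, uniform integrability also passes to the limit: it forces $\E_H[(d_\varphi^{(H)})^m] < \infty$ (e.g.\ via local convergence and Fatou applied to the truncations, or by lower semicontinuity), so $\E_H[(d_\varphi^{(H)})^j\wedge L] \to \E_H[(d_\varphi^{(H)})^j]$ as $L\to\infty$ by monotone convergence, and this limit is finite.

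Finally I would combine the three pieces by a standard $\varepsilon/3$ argument: fix $\varepsilon > 0$, choose $L$ so large that both truncation errors are below $\varepsilon/3$ (the random-graph one uniformly in $n$, in probability — or even surely, since it is a deterministic bound once $L$ is large relative to the U.I.\ modulus), then use the convergence in probability of the truncated empirical average to its truncated limit to absorb the last $\varepsilon/3$ for $n$ large. This yields
\begin{align*}
\P\Bigg\{\Big|\frac{1}{\#V(H_n)}\sum_{i\in V(H_n)}(d_i^{(H_n)})^j - \E_H\big[(d_\varphi^{(H)})^j\big]\Big| > \varepsilon\Bigg\} \too 0, \qquad n\to\infty,
\end{align*}
for each $j\in\{1,\ldots,m\}$, which is the claim. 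The main obstacle — really the only non-routine point — is the careful bookkeeping in matching the combinatorial notion of uniform integrability of the array $((d_i^{(H_n)})^m)_{i,n}$ (a statement about double-indexed deterministic arrays, made random through $H_n$) with the probabilistic tail control needed to make the truncation error vanish uniformly; once that dictionary is set up, the rest is the textbook truncation-plus-local-convergence routine already used in the proof of Theorem~\ref{thm1}.
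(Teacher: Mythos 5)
Your proposal follows essentially the same route as the paper: truncate the degree functional to get a bounded continuous function on $(\mathscr{G},\mathrm{d}_{\mathscr{G}})$, apply local convergence in probability to the truncated empirical average, use monotone convergence on the limit side, and control the truncation error via uniform integrability. The one point where you diverge from the paper --- and where your write-up is imprecise --- is the conversion of the uniform-integrability hypothesis into control of the empirical tail sum. The hypothesis concerns the doubly-indexed family of \emph{random variables} $((d_{i}^{(H_{n})})^{m})_{i,n}$, i.e.\ it gives $\sup_{n}\sup_{i}\E_{H_{n}}\big[(d_{i}^{(H_{n})})^{m}\,\mathbbm{1}_{\{d_{i}^{(H_{n})}>M\}}\big]\to 0$ as $M\to\infty$, where the expectation is over the randomness of $H_n$. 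Your claim that the empirical truncation error $\frac{1}{\#V(H_n)}\sum_i (d_i^{(H_n)})^m\mathbbm{1}_{\{\cdots\}}$ is small ``uniformly in $n$ \ldots or even surely'' does not follow from this: that quantity is itself random, and UI only bounds its expectation. The missing (but one-line) step is Markov's inequality, which is exactly what the paper uses to bound $\P_{H_n}\{\text{empirical tail sum}>\epsilon/2\}$ by $\frac{2}{\epsilon}\sup_{n}\sup_{i}\E_{H_{n}}[\cdots]$, yielding smallness in probability rather than surely. With that substitution your $\varepsilon/3$ argument closes correctly, so this is a repairable imprecision rather than a structural flaw.
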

\begin{proof}
Consider $j \in \{1, \ldots, m\}$. Without loss of generality, assume that $(H, \varphi)$ is everywhere locally finite, in a manner similar to the construction in \eqref{Ginfinity}. 

For $M \in \N$, define a bounded function $h_{j,M}\colon\,(\mathscr{G},\mathrm{d}_{\mathscr{G}}) \to (\R,\vert\cdot\vert)$ by setting
\begin{align*}
h_{j,M}((G,o))=(d^{(G)}_{o})^{j}\,\mathbbm{1}_{\{d^{(G)}_{o}\leq M\}}.
\end{align*}
Then, in a manner analogous to the proof of the continuity of the function in \eqref{hindicator} provided in the proof of Theorem \ref{thm1}, it can be easily shown that $h_{j,M}$ is continuous. Therefore, by the local convergence in probability of $(H_{n})_{n\in\N}$ to $(H, \varphi)$, we have
\begin{align}\label{lem1}
\frac{1}{\#V(H_{n})}\sum_{i\in V(H_{n})}(d_{i}^{(H_{n})})^{j}\,\mathbbm{1}_{\{d_{i}^{(H_{n})}\leq M\}}\prob \E_{H}\Big[(d_{\varphi}^{(H)})^{j}\,\mathbbm{1}_{\{d_{\varphi}^{(H)}\leq M\}}\Big],\qquad n\to\infty .
\end{align}
Moreover, by the monotone convergence theorem, we have that
\begin{align*}
\E_{H}\Big[(d_{\varphi}^{(H)})^{j}\,\mathbbm{1}_{\{d_{\varphi}^{(H)}\leq M\}}\Big]\to\E_{H}\big[(d_{\varphi}^{(H)})^{j}\big]\qquad M\to\infty .
\end{align*}
Therefore, for any $\epsilon >0$, we can take $M_{\epsilon}\in\N$ such that for all $M\geq M_{\epsilon}$,
\begin{align*}
\bigg|\E_{H}\Big[(d_{\varphi}^{(H)})^{j}\,\mathbbm{1}_{\{d_{\varphi}^{(H)}\leq M\}}\Big]-\E_{H}\big[(d_{\varphi}^{(H)})^{j}\big]\bigg|<\frac{\epsilon}{4},
\end{align*}
and by the Markov inequality,
\begin{align*}
&\P_{H_{n}}\bigg\{\Big |\tfrac{1}{\#V(H_{n})}\sum_{i\in V(H_{n})}(d_{i}^{(H_{n})})^{j}-\E_{H}\big[(d_{\varphi}^{(H)})^{j}\big]\Big |>\epsilon\bigg\}
\\&\leq \P_{H_{n}}\bigg\{\Big |\tfrac{1}{\#V(H_{n})}\sum_{i\in V(H_{n})}(d_{i}^{(H_{n})})^{j}\,\mathbbm{1}_{\{d_{i}^{(H_{n})}\leq M\}}-\E_{H}\big[(d_{\varphi}^{(H)})^{j}\big]\Big|>\tfrac{\epsilon}{2}\bigg\}\\&\quad
+\P_{H_{n}}\bigg\{\tfrac{1}{\#V(H_{n})}\sum_{i\in V(H_{n})}(d_{i}^{(H_{n})})^{j}\,\mathbbm{1}_{\{d_{i}^{(H_{n})}> M\}}>\tfrac{\epsilon}{2}\bigg\}\\&
\leq \P_{H_{n}}\bigg\{\Big |\tfrac{1}{\#V(H_{n})}\sum_{i\in V(H_{n})}(d_{i}^{(H_{n})})^{j}\,\mathbbm{1}_{\{d_{i}^{(H_{n})}\leq M\}}-\E_{H}\big[(d_{\varphi}^{(H)})^{j}\,\mathbbm{1}_{\{d_{\varphi}^{(H)}\leq M\}}\big]\Big|>\tfrac{\epsilon}{4}\bigg\}\\&\quad
+\frac{2}{\epsilon}\,\sup_{n\in\N}\,\sup_{i\in V(H_{n})}\E_{H_{n}}\bigg[(d_{i}^{(H_{n})})^{m}\,\mathbbm{1}_{\{d_{i}^{(H_{n})}> M\}}\bigg],
\end{align*}
where $\P_{H_{n}}$ denotes the probability measure on the underlying space of the random graph $H_{n}$, and $\E_{H_{n}}$ represents the expectation with respect to the randomness of $H_{n}$. By taking the limit as $n\to\infty$ and then $M\to\infty$, and applying \eqref{lem1} along with the uniform integrability condition, we establish the lemma.
\end{proof}

\begin{proof}[Proof of Theorem \ref{thm3}]
Let $U_{n}$ be a uniformly chosen vertex from $[n]$. Also, let $\E_{\bar{\Lambda}_{n}}$ be defined as the proof of Theorem \ref{thm1}.

\medskip\noindent
(a) Fix $n\in\N_{3}$ for non-backtracking exploration and $n\in\N_{2}$ for backtracking exploration. From \eqref{statlim}, we have
\begin{align*}
\Delta_{U_n,n}^{(k)}-\Dst_{U_n,n} =\sum_{j \in [n]} \big[P_{n}^{(k)}(U_n,j)-\pi_{n}(j)\big] \,d_{j}^{(G_{n})} 
\as 0, \qquad k\to \infty .
\end{align*}
Now, let $f\colon\,(\R ,|\cdot|) \to (\R ,|\cdot|)$ be an arbitrary bounded and continuous function. Then $f(\Delta_{U_n,n}^{(k)})-f(\Dst_{U_n,n} )\as 0$ as $k\to \infty$. In particular, by the dominated convergence theorem,
\begin{align*}
\int_\R f\dd\mu_{n}^{(k)} = \E_{\bar{\Lambda}_{n}}\big[f(\Delta_{U_n,n}^{(k)}) \mid G_{n}\big] \as \E_{\bar{\Lambda}_{n}}
\big[f(\Dst_{U_n,n} ) \mid G_{n}\big] = \int_\R f\dd\mu^{(\infty)}_{n}, \qquad k\to \infty,
\end{align*}
which establishes the claim.

\medskip\noindent
(b) For every bounded and continuous function $h\colon\,(\mathscr{G},\mathrm{d}_{\mathscr{G}}) \to (\R,\vert\cdot\vert)$ we have \eqref{a-1}. Suppose that
\begin{align}
\label{t62}
&m_{n}^{(1)} = \E_{\bar{\Lambda}_{n}}\Big[\dn_{U_{n}} \mid G_{n}\Big], 
\qquad
m_{n}^{(2)} = \E_{\bar{\Lambda}_{n}}\Big[\big(\dn_{U_{n}}\big)^{2} \mid G_{n}\Big]. 
\end{align}
Then, for $i\in[n]$, 
\begin{align*}
\Dst_{i,n}=\dfrac{m_{n}^{(2)}}{m_{n}^{(1)}}-\dn_{i}.
\end{align*}
Let $f\colon\,(\R ,\vert\cdot\vert) \to (\R ,\vert\cdot\vert)$ be an arbitrary bounded and uniform continuous function. To complete the proof by the Portmanteau theorem, it suffices to show that
\begin{align}
\label{kn1}
\E_{\bar{\Lambda}_{n}}\Big[f\Big(\frac{m_{n}^{(2)}}{m_{n}^{(1)}}-\dn_{U_n}\Big) \mid G_{n}\Big]\prob 
\E_{\bar{\Lambda}}\Big[f\Big(\frac{m^{(2)}}{m^{(1)}}-d_{\phi}\Big)\Big]=\E_{\bar{\Lambda}}\big[f(\Delta_{\phi})\big],
\qquad n\to\infty.
\end{align}

First, we note that the continuous mapping theorem, along with Lemma \ref{lem}, implies that
\begin{align}
\label{kn2}
\frac{m_{n}^{(2)}}{m_{n}^{(1)}} \prob \frac{m^{(2)}}{m^{(1)}},\qquad n\to\infty.
\end{align}
Now fix an arbitrary $\epsilon>0$. Define the event 
\begin{align*}
A_{n,\epsilon}=\bigg\{\Big |\frac{m_{n}^{(2)}}{m_{n}^{(1)}}-\frac{m^{(2)}}{m^{(1)}} \Big|\leq \epsilon\bigg\}.
\end{align*}
Note that $m^{(1)}$ and $m^{(2)}$ are deterministic because $\bar{\Lambda}$ is a deterministic measure. We have
\begin{equation}
\label{kn3}
\begin{aligned}
\E_{\bar{\Lambda}_{n}}\Big[f\Big(\frac{m_{n}^{(2)}}{m_{n}^{(1)}}-\dn_{U_n}\Big) ~\Big|~ G_{n}\Big]
&=\E_{\bar{\Lambda}_{n}}\Big[f\Big(\frac{m_{n}^{(2)}}{m_{n}^{(1)}}-\dn_{U_n}\Big) ~\Big|~ G_{n}\Big]
\mathbbm{1}_{A_{n,\epsilon}}\\
&\quad+\E_{\bar{\Lambda}_{n}}\Big[f\Big(\frac{m_{n}^{(2)}}{m_{n}^{(1)}}-\dn_{U_n}\Big) 
~\Big|~ G_{n}\Big]\mathbbm{1}_{A_{n,\epsilon}^{c}}.
\end{aligned}
\end{equation}
By \eqref{kn2} and the boundedness of $f$, the term in the second line of \eqref{kn3} converges to $0$ in probability. Defining the interval $K_{\epsilon} = [m^{(2)}/m^{(1)}-\epsilon,m^{(2)}/m^{(1)}+\epsilon]$ and two functions
\begin{align*}
&g_{1,\epsilon}(x) = \inf_{y\in K_{\epsilon}} f(y-x),\\
&g_{2,\epsilon}(x) = \sup_{y\in K_{\epsilon}} f(y-x),
\end{align*}
we also have
\begin{align}
\label{kn4}
&\E_{\bar{\Lambda}_{n}}\Big[g_{1,\epsilon}\big(\dn_{U_n}\big) ~\Big|~ G_{n}\Big]\mathbbm{1}_{A_{n,\epsilon}} \nonumber\\ 
&\leq \E_{\bar{\Lambda}_{n}}\Big[f\Big(\frac{m_{n}^{(2)}}{m_{n}^{(1)}}-\dn_{U_n}\Big) ~\Big|~ G_{n}\Big] 
\mathbbm{1}_{A_{n,\epsilon}}
\leq \E_{\bar{\Lambda}_{n}}\Big[g_{2,\epsilon}\big(\dn_{U_n}\big) ~\Big|~ G_{n}\Big]\mathbbm{1}_{A_{n,\epsilon}},
\end{align}
almost surely. Both $g_{1,\epsilon}$ and $g_{2,\epsilon}$ are continuous. Indeed,
if $x_n \to x$ as $n\to\infty$ in $(\R ,\vert\cdot\vert)$, then the uniform continuity of $f$ implies that
\[
\sup_{y \in K_{\epsilon}} \big|f(y- x_n) - f(y - x) \big|
\leq \omega_f(|x_n - x|) \to 0,\qquad n\to\infty,
\]
where $\omega_f$ denotes the modulus of continuity of $f$. For any real-valued functions $a,b$ defined on the set $K_\epsilon$, 
\[
\Big| \inf_{y\in K_\epsilon} a(y) - \inf_{y\in K_\epsilon} b(y) \Big| 
\leq \sup_{y\in K_\epsilon} |a(y)-b(y)|,
\qquad
\Big| \sup_{y\in K_\epsilon} a(y) - \sup_{y\in K_\epsilon} b(y) \Big|
\leq \sup_{y\in K_\epsilon} |a(y)-b(y)|.
\]
Applying this with $a(y) = f(y - x_n)$ and $b(y) = f(y - x)$ gives
\[
\big| g_{1,\epsilon}(x_n) - g_{1,\epsilon}(x) \big|
\leq \omega_f(|x_n - x|) \to 0,\qquad n\to\infty,
\]
and 
\[
\big| g_{2,\epsilon}(x_n) - g_{2,\epsilon}(x) \big|
\leq \omega_f(|x_n - x|) \to 0,\qquad n\to\infty.
\]
Hence $g_{1,\epsilon}$ and $g_{2,\epsilon}$ are continuous 
(indeed, uniformly continuous with the same modulus).
If $h_1((G,o)) = d_o^{(G)}$ for a rooted graph $(G,o)\in\mathscr{G}$, then, similarly to the proof of the continuity of \eqref{hindicator} in Theorem~\ref{thm1}, it follows that $h_1$ is continuous, since it depends only on the finite rooted ball $B_1^{(G)}(o)$. Now employing two real bounded continuous functions $g_{1,\epsilon}\circ h_{1}$ and $g_{2,\epsilon}\circ h_{1}$ to \eqref{a-1}, from the continuous mapping theorem, we obtain that
\begin{equation}
\label{kn7}
\begin{aligned}
&Y_{n,\epsilon} = \E_{\bar{\Lambda}_{n}}\Big[g_{1,\epsilon}\big(\dn_{U_n}\big) 
~\Big|~ G_{n}\Big]\mathbbm{1}_{A_{n,\epsilon}}\prob\E_{\bar{\Lambda}}\big[g_{1,\epsilon}\big(d_{\phi}\big)\big]
= Y_{\epsilon},\qquad n\to\infty,\\
&Z_{n,\epsilon} = \E_{\bar{\Lambda}_{n}}\Big[g_{2,\epsilon}\big(\dn_{U_n}\big) 
~\Big|~ G_{n}\Big]\mathbbm{1}_{A_{n,\epsilon}}\prob\E_{\bar{\Lambda}}\big[g_{2,\epsilon}\big(d_{\phi}\big)\big]
= Z_{\epsilon},\qquad n\to\infty.
\end{aligned}
\end{equation}

Let
\begin{align*}
X_{n,\epsilon} = \E_{\bar{\Lambda}_{n}}\left[f\left(\frac{m_{n}^{(2)}}{m_{n}^{(1)}}-\dn_{U_n}\right) ~\Bigg|~ G_{n}\right]
\mathbbm{1}_{A_{n,\epsilon}}.
\end{align*}
Consider an arbitrary $\eta>0$. By the dominated convergence theorem, both $Y_{\epsilon}$ and $Z_{\epsilon}$ converge to $\E_{\bar{\Lambda}}[f(\Delta_{\phi})]$ as $\epsilon\downarrow 0$. Hence, for small enough $\epsilon$, 
\[
\E_{\bar{\Lambda}}[f(\Delta_{\phi})]-\tfrac{\eta}{2}\leq Y_{\epsilon}\leq \E_{\bar{\Lambda}}[f(\Delta_{\phi})]
+ \tfrac{\eta}{2},
\] 
and 
\[
\E_{\bar{\Lambda}}[f(\Delta_{\phi})]-\tfrac{\eta}{2}\leq Z_{\epsilon}\leq\E_{\bar{\Lambda}}[f(\Delta_{\phi})]
+ \tfrac{\eta}{2}.
\] 
Therefore, using \eqref{kn4}, we can conclude that
\begin{align*}
\P_{n}\Big\{\E_{\bar{\Lambda}}[f(\Delta_{\phi})]-\eta\leq X_{n,\epsilon}
\leq \E_{\bar{\Lambda}}[f(\Delta_{\phi})] +\eta\Big\}
&\geq \P_{n}\Big\{Y_{\epsilon} - \tfrac{\eta}{2}\leq X_{n,\epsilon}\leq Z_{\epsilon} + \tfrac{\eta}{2}\Big\}\\
&\geq \P_{n}\Big\{\big| Y_{n,\epsilon}-Y_{\epsilon}\big |\leq \tfrac{\eta}{2},\,\big| Z_{n,\epsilon}-Z_{\epsilon}\big|
\leq \tfrac{\eta}{2}\Big\}.
\end{align*}
Letting $n\to\infty$, we obtain from \eqref{kn7} that $X_{n,\epsilon}\prob \E_{\bar{\Lambda}}[f(\Delta_{\phi})]$ as $n\to\infty$. This, together with \eqref{kn3} and the continuous mapping theorem, implies \eqref{kn1}, which settles the claim. 
\end{proof}

\begin{proof}[Proof of Theorem \ref{thm6}]
Let $U_{n}$ be a uniformly chosen vertex from $[n]$. Also, let $\E_{\bar{\Lambda}_{n}}$ be defined as the proof of Theorem \ref{thm1}.

\medskip\noindent
(a) Fix $n\in\N_{2}$. From \eqref{statlim2}, we have
\begin{align*}
\Delta_{U_n,n}^{(k,\mathrm{lazy})}-\Delta^{(\mathrm{st},\mathrm{c})}_{U_n,n} =\sum_{j \in [n]} \big[P_{n}^{(k,\mathrm{lazy})}(U_n,j)-\pi_{n}^{(\mathrm{c})}(j)\big] \,d_{j}^{(G_{n})} 
\to 0, \qquad k\to \infty ,
\end{align*}
everywhere. Thus, the result follows by the same approach as in the proof of Theorem \ref{thm3}(a).

\medskip\noindent
(b) For each $i\in[n]$, we have
\begin{align*}
\Delta^{(\mathrm{st},\mathrm{c})}_{i,n} = \sum_{j\in [n]}\frac{(\dn_{j})^2}{\sum_{l\in V(C_{j})}\dn_{l}}-\dn_{i}.
\end{align*}
To complete the proof, it suffices to show that
\begin{align}
\label{t61}
M_{n}=\sum_{j\in [n]}\frac{(\dn_{j})^2}{\sum_{l\in V(C_{j})}\dn_{l}} \prob \frac{m^{(2)}}{m^{(1)}},\qquad n\to\infty.
\end{align}
This allows us to apply a similar argument, as used in the proof of Theorem \ref{thm3}(b), to establish the theorem. 

Let $m_{n}^{(1)}$ and $m_{n}^{(2)}$ be defined as in \eqref{t62}. For any $\epsilon>0$,
\begin{align*}
\P_{n}\bigg\{\Big|M_{n}-\frac{m^{(2)}}{m^{(1)}}\Big|>\epsilon\bigg\}&
\leq \P_{n}\bigg\{\Big|\frac{m_{n}^{(2)}}{m_{n}^{(1)}}-\frac{m^{(2)}}{m^{(1)}}\Big|>\epsilon\bigg\}+\P_{n}\Big\{G_{n}\text{ is disconnected}\Big\}\to 0,\quad n\to\infty,
\end{align*}
by \eqref{kn2} and the assumption of the theorem. This completes the proof.
\end{proof}


\subsection{Convergence in the double limit}
\label{sec:doublelimit}

\begin{proof}[Proof of Theorem \ref{thmA1}]
Let $U_{n}$ be a uniformly chosen vertex from $[n]$. Also, let $\E_{\bar{\Lambda}_{n}}$ be defined as in the proof of Theorem \ref{thm1}. Set $M=\sup_{n}\sup_{i\in[n]}d_{i}^{(G_{n})}$. For $n\in\N_{3}$ in the non-backtracking exploration and $n\in\N_{2}$ in the backtracking exploration, we have
\begin{align*}
\Delta_{U_n,n}^{(k_n)}-\Dst_{U_n,n} =\sum_{j \in [n]} \big[P_{n}^{(k_n)}(U_n,j)-\pi_{n}(j)\big] \,
d_{j}^{(G_{n})} \leq 2 M \mathcal{D}_{n}(k_{n}).
\end{align*}
Here, we recall that $\Dst_{i,n}$ is defined in \eqref{Deltast}. Since $\lim_{n\to\infty} k_{n}/\psi_{n}=\infty$, we have that 
\begin{align}
\label{thmA11}
\Delta_{U_n,n}^{(k_n)}-\Dst_{U_n,n}\prob 0, \qquad n\to \infty.
\end{align}
Using the Skorohod representation theorem, we can couple the random variables and assume the existence of a probability space on which the convergence in \eqref{thmA11} holds almost surely. For simplicity, we will use the same notation for the new random variables as for the original ones.

Let $f\colon\,(\R ,|\cdot|) \to (\R,|\cdot|)$ be an arbitrary bounded and uniformly continuous function. Then, by the uniform continuity of $f$, 
\begin{align*}
f(\Delta_{U_n,n}^{(k_n)})-f(\Dst_{U_n,n} )\as 0, \qquad n\to \infty.
\end{align*}
In particular, by the dominated convergence theorem,
\begin{align*}
\int_\R f\dd\mu_{n}^{(k_n)}-\int_\R f\dd\mu^{(\infty)}_{n} 
= \E_{\bar{\Lambda}_{n}}\Big[f(\Delta_{U_n,n}^{(k_n)})-f(\Dst_{U_n,n} ) \mid G_{n}\Big] \prob 0, \qquad n\to \infty.
\end{align*}
On the other hand, from Theorem \ref{thm3}(b),
\begin{align*}
\int_\R f\dd\mu^{(\infty)}_{n} \prob \int_\R f\dd\mu, \qquad n\to \infty.
\end{align*}
Therefore, by applying the continuous mapping theorem, we have
\begin{align*}
\int_\R f\dd\mu_{n}^{(k_n)}  \prob \int_\R f\dd\mu, \qquad n\to \infty,
\end{align*}
which, by the Portmanteau theorem, settles the claim.
\end{proof}

\begin{proof}[Proof of Theorem \ref{thmA2}]
Let $f\colon\,(\R,\vert\cdot\vert) \to (\R,\vert\cdot\vert)$ be an arbitrary bounded and uniformly continuous function. Consider $k_{n}$ such that $k_{n}\leq \frac{\log m_{n}}{2\log M}$ with $k_{n}\to\infty$ as $n\to\infty$. To complete the proof, by the Portmanteau theorem it is sufficient to show that
\begin{align}
\label{CM-new-1}
\E_{\bar{\Lambda}_{n}}\big[f(\Delta_{U_n,n}^{(k_n)}) \mid G_{n}\big]=\int_\R f\dd\mu_{n}^{(k_n)}  \weak \int_\R f\dd\mu=\E_{\bar{\Lambda}}\big[f(\Delta_{\phi})\big],\qquad n\to\infty .
\end{align}

Let $U_{n}$ be a uniformly chosen vertex in $[n]$. Also, let $\E_{\bar{\Lambda}_{n}}$ be defined as the proof of Theorem \ref{thm1}. We define:
\begin{itemize}
\item 
$(\mathrm{G}_{n}(t))_{t\in\N}$ is the graph exploration process from $U_n$, i.e., $\mathrm{G}_{n}(t)$ is the exploration where precisely $t-1$ half-edges have been paired in the breadth-first manner, that also includes the half-edges incident to the discovered vertices. In particular, $\mathrm{G}_{n}(1)$ contains $U_n$ and its $D_n=\dn_{U_n}$ half-edges. Note that every further exploration corresponds to the pairing of a half-edge, and that from $(\mathrm{G}_{n}(t))_{t\in\N}$ we can retrieve all neighbourhoods around $U_n$ in the graph $G_n$. 
\item 
$(\mathrm{BP}_{n}(t))_{t\in\N}$ is the branching process where its root, denoted by $\varnothing$, has a number of offspring distributed as $D_n$, while all other vertices have offspring distributed as $D_{n}^{\star}-1$, where $D_{n}^{\star}$ is the size-biased version of $D_n$, i.e., $\mathrm{BP}_{n}(t)$ is the branching process where  precisely $t$ tree vertices have been explored in the breadth-first order, so that $\mathrm{BP}_{n}(1)$ contains $\varnothing$ and its neighbours. Note that a tree vertex is considered explored once it has been inspected how many children it has. Also we have, as $n\to\infty$,
\begin{align}
\label{CM-new-0}
D_{n}\weak D  \qquad D_{n}^{\star}\weak D^{\star},
\end{align}
where $D^{\star}$ is defined as in \eqref{CM-neq--2}.
\end{itemize}

According to \cite[Lemma 4.2]{RvdH2}, there exists a coupling $(\hat{\mathrm{G}}_{n}(t),\hat{\mathrm{BP}}_{n}(t))_{t\in\N}$ of $(\mathrm{G}_{n}(t))_{t\in\N}$ and $(\mathrm{BP}_{n}(t))_{t\in\N}$ such that
\begin{align}
\label{CM-new-2}
\P_{n}\Big\{(\hat{\mathrm{G}}_{n}(t))_{t\in[m_n]}\neq (\hat{\mathrm{BP}}_{n}(t))_{t\in[m_n]}\Big\}=o(1),\qquad n\to\infty .
\end{align}
In the following, we consider $n$ large enough and run finite non-backtracking random walks on $(\hat{\mathrm{G}}_{n}(t))_{t\in\N}$ and $(\hat{\mathrm{BP}}_{n}(t))_{t\in\N}$ (and later also on $(\mathrm{BP}_{n}(t))_{t\in\N}$). To avoid redefining the same object for different graphs or processes, we will use a suffix to indicate the corresponding version. For any $\epsilon>0$, we have
\begin{align*}
\P_{n}\Big\{\big|\Delta_{U_n,n,\hat{\mathrm{G}}_{n}}^{(k_n)}-\Delta_{U_n,n,\hat{\mathrm{BP}}_{n}}^{(k_n)}\big|>\epsilon\Big\} \leq \P_{n}\Big\{(\hat{\mathrm{G}}_{n}(t))_{t\in[k_n]}\neq (\hat{\mathrm{BP}}_{n}(t))_{t\in[k_n]}\Big\}.
\end{align*}
because the number of tree vertices that can be explored up to $k_n$th-generation of the branching process $(\hat{\mathrm{BP}}_{n}(t))_{t\in\N}$ is at most $k_{n}M^{k_n}\leq M^{2k_n}$ for $n$ large enough, which is less than or equal to $m_n$. Hence, by \eqref{CM-new-2},
\begin{align*}
\Delta_{U_n,n,\hat{\mathrm{G}}_{n}}^{(k_n)}-\Delta_{U_n,n,\hat{\mathrm{BP}}_{n}}^{(k_n)} \prob 0,\qquad n\to\infty.
\end{align*}
But $\Delta_{U_n,n,\hat{\mathrm{G}}_{n}}^{(k_n)}$ is simply a copy of $\Delta_{U_n,n}^{(k_n)}$, and $\Delta_{U_n,n,\hat{\mathrm{BP}}_{n}}^{(k_n)}$ is likewise a copy of $\Delta_{\varnothing,n,\mathrm{BP}_{n}}^{(k_n)}$ where $\varnothing$ is the uniformly chosen vertex $U_n$. Therefore, keeping $\varnothing=U_n$, we have
\begin{align}
\label{CM-neww-2}
\Delta_{U_n,n}^{(k_n)}-\Delta_{\varnothing,n,\mathrm{BP}_{n}}^{(k_n)} \prob 0,\qquad n\to\infty.
\end{align}

From \eqref{CM-new-0}, we have
\begin{align}
\label{CM-neww-1}
d_{\varnothing}^{(\mathrm{BP}_{n})}\weak d_{\phi}.
\end{align}
 Since $d_{j}^{(\mathrm{BP}_{n})}$ and $P_{n,\mathrm{BP}_{n}}^{(k_n)}(\varnothing, j)$ are independent for each $j\in V_{k_n}(\mathrm{BP}_{n})$, where $V_{k_n}(\mathrm{BP}_{n})$ is the set of all vertices in the branching process $(\mathrm{BP}_{n}(t))_{t\in\N}$ up to and including generation $k_n$, we have, by the dominated convergence theorem and \eqref{CM-new-0},
\begin{align*}
\E_n\bigg[\sum_{j\in V_{k_n}(\mathrm{BP}_{n})} P_{n,\mathrm{BP}_{n}}^{(k_n)}(\varnothing, j)\, 
d^{(\mathrm{BP}_{n})}_{j}\bigg] = \E[D_{n}^{\star}]\to \E[D^{\star}],\qquad n\to\infty .
\end{align*}
Moreover, since $P_{n,\mathrm{BP}_{n}}^{(k_n)}(\varnothing, j)\,P_{n,\mathrm{BP}_{n}}^{(k_n)}(\varnothing, l)$ is non-zero only if both $j$ and $l$ are in the $k_n$th-generation and $j \neq l$, we must have that $P_{n,\mathrm{BP}_{n}}^{(k_n)}(\varnothing, j)P_{n,\mathrm{BP}_{n}}^{(k_n)}(\varnothing, l)$ is independent of $d^{(\mathrm{BP}_{n})}_{j}$ and $d^{(\mathrm{BP}_{n})}_{l}$. Since $\sup_{j\in V_{k_n}(\mathrm{BP}_{n})}P_{n,\mathrm{BP}_{n}}^{(k_n)}(\varnothing, j)\leq 2^{-k_n}$ we have, by the dominated convergence theorem and \eqref{CM-new-0},
\begin{align*}
\E_n\bigg[\Big(\sum_{j\in V_{k_n}(\mathrm{BP}_{n})} P_{n,\mathrm{BP}_{n}}^{(k_n)}(\varnothing, j)\, 
d^{(\mathrm{BP}_{n})}_{j}\Big)^2\bigg]
\leq M^2 \, 2^{-k_n}+\big(\E[D_{n}^{\star}]\big)^2 \to \big(\E[D^{\star}]\big)^2,\qquad n\to \infty.
\end{align*}
Therefore, by the Chebyshev inequality,
\begin{align*}
\sum_{j\in V_{k_n}(\mathrm{BP}_{n})} P_{n,\mathrm{BP}_{n}}^{(k_n)}(\varnothing, j)\, 
d^{(\mathrm{BP}_{n})}_{j}\prob \E[D^{\star}]=\dfrac{\E[D^2]}{\E[D]},\qquad n\to\infty.
\end{align*}
Hence, by the continuous mapping theorem and \eqref{CM-neww-1},
\begin{align*}
\Delta_{\varnothing ,n,\mathrm{BP}_{n}}^{(k_n)}\weak \Delta_{\phi},\qquad n\to\infty,
\end{align*}
and, by the continuity of $f$ and the dominated convergence theorem,
\begin{align}
\label{CM-newww}
\E\big[f(\Delta_{\varnothing ,n,\mathrm{BP}_{n}}^{(k_n)})\big]
\,\to\, 
\E_{\bar{\Lambda}}\big[f(\Delta_{\phi})\big],\qquad n\to\infty.
\end{align}
Applying the Skorohod representation theorem, we can couple the random variables such that the convergence in \eqref{CM-neww-2} is almost sure. By the uniform continuity of $f$ we can therefore conclude that 
\begin{align*}
f(\Delta_{U_n,n}^{(k_n)}) - f(\Delta_{\varnothing ,n,\mathrm{BP}_{n}}^{(k_n)}) \prob 0, \qquad n\to\infty,
\end{align*}
and so, by the dominated convergence theorem, 
\begin{align*}
\E_{\bar{\Lambda}_{n}}\big[f(\Delta_{U_n,n}^{(k_n)}) \mid G_{n}\big] 
- \E\big[f(\Delta_{\varnothing ,n,\mathrm{BP}_{n}}^{(k_n)})\big] \prob 0,\qquad n\to\infty.
\end{align*}
This, together with \eqref{CM-newww} and the continuous mapping theorem, implies \eqref{CM-new-1} and establishes the theorem.
\end{proof}

\begin{proof}[Proof of Theorem \ref{thm4}]
Consider a sequence $(k_n)_{n\in\N}$ satisfying $\lim_{n\to\infty} k_n = \infty$. From the assumptions of the theorem and the Skorohod representation theorem, we can couple the random variables on a probability space $(\Omega,\mathcal{F},\mathbb{P})$ such that
\begin{align*}
\Psi(N)\as 0, \qquad N\to\infty,
\end{align*}
and
\begin{align}
\label{thm4-1}
\mu_{n}^{(\infty)} \weak \mu, \qquad n\to\infty \qquad \text{almost surely.}
\end{align}
Here, for simplicity, we have used the same notation for the new random variables as for the original ones. Since weak convergence in Polish spaces is equivalent to convergence in the Prohorov metric \cite[Theorem 6.8]{Billingsley}, from \eqref{thm4-1} we have
\begin{align*}
\mathrm{d}_{P}(\mu_{n}^{(\infty)},\mu )\as 0,\qquad n\to\infty. 
\end{align*}
As a result, there exists $\Omega^{\prime}\subseteq \Omega$ with $\P\{\Omega^{\prime}\}=1$ such that for any $\omega\in\Omega^{\prime}$ and any $\epsilon>0$, we can find $N_{\epsilon}\in\N$ such that, for all $n,N\geq N_{\epsilon}$,
\begin{align*}
\mathrm{d}_{P}(\mu_{n}^{(k_n)},\mu )\leq \mathrm{d}_{P}(\mu_{n}^{(k_n)},\mu_{n}^{(\infty)} )
+ \mathrm{d}_{P}(\mu_{n}^{(\infty)},\mu )\leq \Psi(N)+\mathrm{d}_{P}(\mu_{n}^{(\infty)},\mu ) < \epsilon.
\end{align*}
Therefore $\mathrm{d}_{P}(\mu_{n}^{(k_n)},\mu)\as 0$ as $n\to\infty$ on $(\Omega , \mathcal{F},\mathbb{P})$, which implies that in the original probability space, $\mu_{n}^{(k_n)} \weak \mu$ as $n\to\infty$ in probability, which completes the proof of the theorem.
\end{proof}



\end{document}